\newtheorem{thm}{Theorem}[section]
\newtheorem{cor}[thm]{Corollary}
\newtheorem{lem}{Lemma}[section]
\newtheorem{prop}[thm]{Proposition}
\newtheorem{ass}{Assumption}
\theoremstyle{definition}
\theoremstyle{remark}
\newtheorem{rem}[thm]{Remark}
\numberwithin{equation}{section}
\newcommand{\ep}{\epsilon}
\begin{document}
\title[]
{On the convergence of decentralized gradient descent with diminishing stepsize, revisited} 
\author{Woocheol Choi, Jimyeong Kim \\
(Sungkyunkwan University, Mathematics)}
\address{}
\email{choiwc@skku.edu}
\email{jimkim@skku.edu}

\maketitle

\begin{abstract}
Distributed optimization has received a lot of interest in recent years due to its wide applications in various fields.  In this work, we revisit the convergence property of the decentralized gradient descent [A. Nedi{\'c}-A.Ozdaglar (2009)] on the whole space given by
$$
x_i(t+1) = \sum^m_{j=1}w_{ij}x_j(t) - \alpha(t) \nabla f_i(x_i(t)),
$$
where  the stepsize is given as $\alpha (t) = \frac{a}{(t+w)^p}$ with $0< p\leq 1$. Under the strongly convexity assumption on the total cost function $f$ with  local cost functions $f_i$ not necessarily being convex, we show that the sequence converges to the optimizer with rate $O(t^{-p})$ when the values of $a>0$ and $w>0$ are suitably chosen.
\end{abstract}
\section{Introduction}\label{section1}
There has been a significant interest in distributed optimization techniques in recent years since such techniques play an essential role in engineering problems that consist of multiple agents. For example, distributed control \cite{BCM, CYRC}, signal processing \cite{Boyd Gossip, QT}, and machine learning problems \cite{BCN, FCG, RB}. Distributed optimization arises in settings where each agent has their own local cost function and try to find a minimizer of the sum of those local cost functions in a collaborative way that each agent only uses the information from its neighboring agents without a central controller. The problem is written as 
\begin{equation}\label{prob}
\min_{x\in\mathbb{R}^d}~f(x) :=\frac{1}{m} \sum_{i=1}^m f_i (x),
\end{equation}
where $m$ denotes the number of agents and each local cost $f_i : \mathbb{R}^d \rightarrow \mathbb{R}$ is a differentiable function only known to agent $i$. One fundamental algorithm for this problem is the decentralized gradient descent (DGD) due to Nedi\'{c} and Ozdaglar \cite{NO}. This algorithm, consisting of a consensus step based on a communication pattern designed by $w_{ij}\geq0$ and a local gradient step, is stated as follows:
\begin{equation}\label{eq-1-1}
x_i\left(t+1\right) =   \sum^m_{j=1} w_{ij}x_j\left(t\right) -\alpha\left(t\right) \nabla f_i\left(x_i\left(t\right)\right).
\end{equation}
Here ${x}_j(t) \in \mathbb{R}^d$ is the state at time $t \geq 0$ handled by agent $j$ and $\alpha(t) >0$ is the stepsize.  The convergence property of the decentralized gradient descent has been studied in the works \cite{NO, NO2, RNV, I, YLY}. There are also various distributed algorithms containing the distributed dual averaging method \cite{DAW}, consensus-based dual decomposition \cite{FMGP, SJ}, and the alternating direction method of multipliers (ADMM) based algorithms \cite{MJ, SLYW}. We also refer to \cite{XPNK} for a variety of the decentralized optimization algorithms of first order. 

In this paper, we are concerned with the convergence property of the decentralized algorithm \eqref{eq-1-1}. Nedi{\'c}-Ozdaglar \cite{NO} showed that for the algorithm \eqref{eq-1-1} with the stepsize $\alpha(t) \equiv \alpha$, the cost value $f(\cdot)$ at an average of the iterations converges to an $O(\alpha)$-neighborhood of an optimal value of $f$. Ram-Nedi{\'c}-Veeravalli \cite{RNV} proved that the algorithm \eqref{eq-1-1}, involving a projection to a compact set, converges to an optimal point if the stepsize satisfies $\sum_{t=1}^{\infty}\alpha (t) = \infty$ and $\sum_{t=1}^{\infty}\alpha (t)^2 <\infty$. It was extended by Nedi{\'c}-Ozdaglar \cite{NO2} to the whole space and to the stepsize $\alpha (t) = c/\sqrt{t}$. 
In the work of Chen \cite{I}, the algorithm \eqref{eq-1-1} with stepsize $\alpha (t) = c/t^p$ with $0<p<1$ was considered and the convergene rate was achieved as $O(1/t^p)$ for $0<p<1$, $O(\log t/\sqrt{t})$ for $p=1$, and $O(1/t^{1-p})$ for $1/2<p<1$. We mention that all the aforementioned works were established with the gradient bound assumption $\|\nabla f_i \|_{\infty}<\infty$ and convexity assumption on each \mbox{function $f_i$.} 

Recently, Yuan-Ling-Yin \cite{YLY} considered the algorithm \eqref{eq-1-1} without the gradient bound assumption. They showed that if each local cost function is convex and the total cost is strongly convex, then the algorithm \eqref{eq-1-1} with constant stepsize $\alpha (t) \equiv \alpha$ converges exponentially to an $O(\alpha)$-neighborhood of an optimizer $x_*$ of \eqref{prob}. The previous results we mentioned are summarized in Table \ref{known results1}.

In this work, we investigate the convergence property of the algorithm \eqref{eq-1-1} for a general class of non-increasing stepsize $\{\alpha (t)\}_{t \in \mathbb{N}_0}$ given as $\alpha (t) = a/(t+w)^p$ for $a>0$, $w \geq 1$ and $0< p \leq 1$. As in \cite{YLY}, we do not assume the gradient bound. Furthermore, we only assume strong convexity on the total cost function $f$, with cost functions $f_i$ not necessarily being convex. The setting of this assumption is natural in various distributed optimization problem when the local cost of an agent is non-convex near the minimizer of the total cost function. The convergence results of this paper sheds light on choosing suitable values of $a$ and $w$ for  fast convergence of the algorithm \eqref{eq-1-1}.

 \begin{table}[ht]
\centering
\begin{tabular}{|c|c|c|c|c|c|c| }\cline{1-7}
& Cost& Regularity & Learning rate & Error  &Rate & Proj.
\\
\hline
&&&&&&\\[-1em]
\cite{NO}& C  & $\|\nabla f_i \|_{{\infty}}< \infty$ & $\alpha (t) \equiv \alpha$ & $f(\tilde{x}_i (t)) -f_* $& $O\Big( \frac{1}{\alpha t}\Big) + O (\alpha)$ & No
\\
&&&&&&\\[-1em]
\hline
&&&&&&\\[-1em]
\cite{RNV} & C  & $\|\nabla f_i \|_{{\infty}}< \infty$ &\makecell{ $\sum_{t=1}^{\infty}\alpha (t) = \infty$\\ $\sum_{t=1}^{\infty}\alpha(t)^2 < \infty$} & $\|x_i (t) -x_*\|$ & $o(1)$ & Yes
\\
&&&&&&\\[-1em]
\hline
&&&&&&\\[-1em]
\cite{NO2}& C  &$\|\nabla f_i \|_{{\infty}}< \infty$&\makecell{ $\sum_{t=1}^{\infty}\alpha (t) = \infty$\\ $\sum_{t=1}^{\infty}\alpha(t)^2 < \infty$} & $\|x_i (t) -x_*\|$ & $o(1)$ & No
\\
&&&&&&\\[-1em]
\hline
&&&&&&\\[-1em]
\cite{NO2} & C  & $\|\nabla f_i \|_{ {\infty}}< \infty$ & $\alpha (t) = \frac{c}{\sqrt{t}}$ & $\|x_i (t) -x_*\|$ & $o(1)$ & No
\\
&&&&&&\\[-1em]
\hline
&&&&&&\\[-1em]
\cite{I} & C  &$\|\nabla f_i \|_{{\infty}}< \infty$ & $\alpha (t) = \frac{c}{t^{p}}$ & \makecell{$\min_{1 \leq k \leq m}$ \\ $f(x_k (t)) - f_* $}& \makecell{$O(\frac{1}{t^{p}})$ if $0 < p< \frac{1}{2}$ \\  $O( \frac{\log t}{\sqrt{t}})$ if $p = \frac{1}{2}$ \\ $O(\frac{1}{t^{1-p}})$ if $\frac{1}{2}<p <1$} & Yes
\\
&&&&&&\\[-1em]
\hline
&&&&&&\\[-1em]
\cite{YLY} & C& L-smooth & $\alpha (t) \equiv \alpha$ & $f(x_i (t)) - f^*$ & $O(\frac{1}{\alpha t}) + O(\frac{\alpha}{1-\beta})$ & No 
\\
&&&&&&\\[-1em]
\hline
&&&&&&\\[-1em]
\cite{YLY} & SC  & L-smooth & $\alpha (t) \equiv \alpha$ &$ \|x_i (t) - x_*\|$ &$O(e^{-ct}) + O(\frac{\alpha}{1-\beta})$  & No
\\
&&&&&&\\[-1em]
\hline
&&&&&&\\[-1em]
\makecell{This\\work} & SC  & L-smooth & $ \alpha (t) = \frac{a}{(t+w)^p} $& $\|x_i (t) -x_*\|$ & $O(\frac{1}{t^{p}})$  if $0<p \leq 1$& No
\\
\hline
\end{tabular}
\vspace{0.1cm}
\caption{This table summarizes the convergene results for DGD. Here C (resp., SC) means that the total cost function is assumed to be convex (resp., strongly convex). Also $x_*$ is an optimizer of \eqref{prob} and  $\tilde{x}_i(t) = \frac{1}{t}\sum^{t-1}_{s=0}x_i(s)$. We write `Yes' in  `Proj.' if the algorithm \eqref{eq-1-1} with projection is considered.}\label{known results1}
\end{table}

The rest of the paper is organized as follows. In Section \ref{section2}, we state the assumptions and the main results of this paper. Section \ref{section3} is devoted to establishing two sequential inequalities,  which are essentially used in the proofs of the main theorems. In Section \ref{sec4} we prove the uniform boundedness and the consensus error estimate with stepsize in a suitable range. In addition, the range of the stepsize for the uniform boundedness is shown to be almost sharp. In Section \ref{sec5}, the main theorems on the convergence results are proved. In Section \ref{sec-sim} we present the numerical results of the proposed algorithm. In appendix \ref{app-a}, we prove two lemmas on sequential inequalities.

Before ending this section, we state several notations used in this paper. Let $\mathbb{N}_0$ be the set of natural numbers including $0$. For a matrix $A\in \mathbb{R}^{n\times m}$, $a_{ij}$ denotes the $(i,j)$-th entry of $A$. For a row vector $x\in \mathbf{R}^{1\times d}$, $\|x\| =\sqrt{x\cdot x^T}$ denotes the standard Euclidean norm. And we use the notation $\mathbf{x}$ as $\mathbf{x} = [x^T;x^T;\cdots;x^T]^T \in \mathbb{R}^{n\times d}$. In addition, for $X \in \mathbb{R}^{m\times d}$ given by $X = [x^T_1;x^T_2;\cdots;x^T_m]^T$ with a row vector $x_i\in\mathbb{R}^{1\times d}$, we define the $l_2$-norm $\|X\|$ by $\|X\| = \Big(\sum^m_{i=1} \|x_i\|^2\Big)^{1/2}$. Finally, we use the notation $\mathbf{1}$ to denote $[1,1,\cdots,1]^T\in\mathbb{R}^{n\times 1}$.

\section{Preliminaries and Main Results}\label{section2}
In this section, we state the assumptions and the main results of this paper.
We start by making the following standard assumptions on the cost functions in \eqref{prob}.
\begin{ass}\label{LS}
For each $i\in\{1,\cdots m\}$, the local function $f_i$ is $L_i$-smooth for some $L_i>0$, i.e., for any $x, y \in \mathbb{R}^d$ we have
\begin{equation}\label{L-smooth}
\| \nabla f_i(y) - \nabla f_i(x)\| \leq L_i\|y-x\|\quad \forall~x,y \in \mathbb{R}^d.
\end{equation}
It is well-known that \eqref{L-smooth} gives the following estimate
\begin{equation}\label{L-smooth2}
f_i(y) \leq f_i(x) + (y-x)^T\cdot\nabla f_i(x) + \frac{L_i}{2}\|y-x\|^2.
\end{equation}
We set $L = \max_{1\leq i \leq m} L_i$.
\end{ass}
\begin{ass}\label{sc} 
The total cost function $f$ is $\mu$-strongly convex for some $\mu >0$, i.e., 
\begin{equation}\label{strong}
f(y) \geq f(x) + (y-x)^T\cdot\nabla f(x) + \frac{\mu}{2}\|y-x\|^2
\end{equation}
for all $x,y \in \mathbb{R}^d$.
\end{ass}
  
 The communication pattern among agents in \eqref{prob} is determined by an undirected graph $\mathcal{G}=(\mathcal{V},\mathcal{E})$, where each node in $\mathcal{V}$ represents each agent, and each edge $\{i,j\} \in \mathcal{E}$ means $i$ can send messages to $j$ and vice versa. We consider a graph $\mathcal{G}$ satisfying the following assumption.
\begin{ass}\label{graph}
The communication graph $\mathcal{G}$ is undirected and connected, i.e., there exists a path between any two agents.
\end{ass}
We define the mixing matrix $W = \{w_{ij}\}_{1 \leq i,j \leq m}$ as follows. The nonnegative weight $w_{ij}$ is given for each communication link $\{i,j\}\in \mathcal{E},$ where $w_{ij}\neq0$ if $\{i,j\}\in\mathcal{E}$ and $w_{ij} = 0$ if $\{i,j\}\notin\mathcal{E}$. In this paper, we make the following assumption on the mixing matrix $W$. 
\begin{ass}\label{ass-1-1}
The mixing matrix $W = \{w_{ij}\}_{1 \leq i,j \leq m}$ is doubly stochastic, i.e., $W\mathbf{1}=\mathbf{1}$ and $\mathbf{1}^T W = \mathbf{1}^T$. In addition, $w_{ii}>0$ for all $i \in \mathcal{V}$. 
\end{ass}

Under the above two assumptions on the graph, we have the following result (see \cite[Lemma 1]{PN}).
\begin{lem}\label{lem-1-1}
Suppose Assumptions \ref{graph} and \ref{ass-1-1} hold, and let $\beta$ be the spectral norm of the matrix $W - \frac{1}{m} \mathbf{1} \mathbf{1}^T.$  Then there exists a constant $\beta<1$ such that
\begin{equation*}
\sum_{i=1}^m \Big\| \sum_{j=1}^m w_{ij} (x_j - \bar{x}) \Big\|^2 \leq \beta^2 \sum_{i=1}^m \|x_i - \bar{x}\|^2,
\end{equation*}
 where $\bar{x} = \frac{1}{m} \sum_{k=1}^m x_k$ 
 for any $x_i\in \mathbb{R}^{d\times1}$ and $1\leq i \leq m$.
\end{lem}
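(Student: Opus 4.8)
The plan is to recast the coordinate-wise sum as a single Frobenius-norm inequality for a matrix product, and then exploit the sub-multiplicativity of the spectral norm against the Frobenius norm. I would collect the vectors into the matrix $X = [x_1^T; x_2^T; \cdots; x_m^T]^T \in \mathbb{R}^{m\times d}$, so that $\bar{x}^T = \frac{1}{m}\mathbf{1}^T X$ and the centered matrix $Y := X - \mathbf{1}\bar{x}^T = (I - \frac{1}{m}\mathbf{1}\mathbf{1}^T)X$ has $i$-th row equal to $(x_i - \bar{x})^T$. With this notation the right-hand side of the claim is exactly $\|Y\|^2 = \sum_{i=1}^m \|x_i - \bar{x}\|^2$. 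For the left-hand side, Assumption \ref{ass-1-1} gives $\sum_{j} w_{ij} = 1$, so that $\sum_j w_{ij}(x_j - \bar{x})^T$ is precisely the $i$-th row of $WY = WX - \mathbf{1}\bar{x}^T$; hence the left-hand side equals $\|WY\|^2$.

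The key observation is that $Y$ is centered, i.e. its columns are orthogonal to $\mathbf{1}$: indeed $\mathbf{1}^T Y = \mathbf{1}^T X - (\mathbf{1}^T\mathbf{1})\bar{x}^T = \mathbf{1}^T X - m\bar{x}^T = 0$. Consequently $\frac{1}{m}\mathbf{1}\mathbf{1}^T Y = 0$, and writing $M := W - \frac{1}{m}\mathbf{1}\mathbf{1}^T$ (whose spectral norm is $\beta$ by definition) we obtain $WY = MY$. Applying the bound $\|MY\| \leq \beta\|Y\|$ — which follows column by column, since $\|M y_k\| \leq \beta\|y_k\|$ for each column $y_k$ of $Y$ and summing the squares over $k$ — yields $\|WY\|^2 \leq \beta^2\|Y\|^2$, which is the asserted inequality.

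It remains to check that the spectral norm $\beta$ is strictly less than $1$. As is standard for the undirected graph, the mixing matrix $W$ is symmetric, so $M$ is symmetric and $\beta$ equals its largest eigenvalue in absolute value. Since $W\mathbf{1} = \mathbf{1}$ and $\frac{1}{m}\mathbf{1}\mathbf{1}^T$ is the orthogonal projection onto $\mathrm{span}(\mathbf{1})$, the matrix $M$ annihilates $\mathbf{1}$ and coincides with $W$ on the orthogonal complement $\mathbf{1}^{\perp}$; thus the spectrum of $M$ consists of $0$ together with the eigenvalues of $W$ other than the Perron eigenvalue $1$. By Assumption \ref{graph} the graph is connected, which forces the eigenvalue $1$ of the doubly stochastic $W$ to be simple, and by Assumption \ref{ass-1-1} the diagonal entries $w_{ii}$ are positive, which excludes the eigenvalue $-1$. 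Hence every eigenvalue of $M$ lies in $(-1,1)$ and $\beta < 1$.

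The routine parts are the two reductions to $\|WY\|^2$ and $\|Y\|^2$ and the columnwise spectral-norm estimate; the only genuinely substantive point is the last step, where the \emph{strict} inequality $\beta < 1$ (as opposed to $\beta \le 1$) relies essentially on connectivity and the positivity of the diagonal, through the Perron--Frobenius theory of the consensus matrix $W$.
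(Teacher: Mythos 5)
First, a point of comparison: the paper does not actually prove this lemma --- it is quoted from \cite[Lemma 1]{PN} --- so your argument is a self-contained substitute rather than a parallel of a proof in the paper. Your first two paragraphs are correct and are the standard reduction: with $Y=(I-\frac{1}{m}\mathbf{1}\mathbf{1}^T)X$ one has $\mathbf{1}^TY=0$, hence $WY=MY$ for $M=W-\frac{1}{m}\mathbf{1}\mathbf{1}^T$, and the columnwise operator-norm bound gives the stated inequality with $\beta=\|M\|_2$. This part uses nothing beyond double stochasticity and is fine.

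The gap is in the last step, exactly the one you single out as substantive. You assert that $W$ is symmetric ``as is standard for the undirected graph,'' but the paper's Assumptions \ref{graph} and \ref{ass-1-1} do not grant this: they give a symmetric sparsity pattern ($w_{ij}\neq 0$ iff $w_{ji}\neq 0$) together with double stochasticity, and a doubly stochastic matrix with symmetric support need not be symmetric (e.g.\ on the complete graph with $3$ nodes, the matrix with rows $(0.5,0.3,0.2)$, $(0.2,0.5,0.3)$, $(0.3,0.2,0.5)$). For a non-symmetric $W$ the spectral norm of $M$ is its largest singular value, not the largest modulus of an eigenvalue, so identifying $\beta$ with the spectrum of $W$ on $\mathbf{1}^{\perp}$ and invoking Perron--Frobenius for $W$ does not bound $\beta$; a matrix can have all eigenvalues in $(-1,1)$ and spectral norm larger than $1$. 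The repair is to run your Perron--Frobenius argument on $W^TW$ instead: using $W^T\mathbf{1}=\mathbf{1}$ and $\mathbf{1}^TW=\mathbf{1}^T$ one computes $M^TM=W^TW-\frac{1}{m}\mathbf{1}\mathbf{1}^T$, and $W^TW$ is symmetric, positive semidefinite, doubly stochastic, has positive diagonal ($(W^TW)_{ii}\geq w_{ii}^2>0$), and its support contains every edge of the connected graph (if $w_{ji}>0$ then $(W^TW)_{ij}\geq w_{jj}w_{ji}>0$), hence it is irreducible. Therefore $1$ is a simple eigenvalue of $W^TW$ with eigenvector $\mathbf{1}$, all other eigenvalues lie in $[0,1)$, and subtracting the projection $\frac{1}{m}\mathbf{1}\mathbf{1}^T$ replaces that eigenvalue $1$ by $0$, giving $\beta^2=\lambda_{\max}(M^TM)<1$. (If one adds symmetry of $W$ as an explicit hypothesis, as many DGD papers do, your original third paragraph is complete and correct.)
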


\subsection{Main result}
Our goal in this paper is to establish the convergence property of the decentralized gradient descent \eqref{eq-1-1} for a general class of non-increasing stepsize $\{\alpha (t)\}_{t \in \mathbb{N}_0}$ given as $\alpha (t) = a/(t+w)^p$ for $a>0$, $w \geq 1$ and $0\leq p \leq 1$. Before stating our main results, we define some notations and constants to be used.

Let $x_* = \arg\min_{x\in\mathbb{R}^d} f(x)$ be the optimal point, whose existence is guaranteed by Assumption \ref{sc}, and  $D = \max_{1 \leq i \leq n} \|\nabla f_i (x_*)\|$. We regard $x_i(t)$ as a row vector in $\mathbb{R}^{1\times d}$, and define the variable  $\mathbf{x}(t)\in \mathbb{R}^{m\times d}$ by
\begin{equation}\label{eq-2-4}
\mathbf{x}(t)=\left(x_1\left(t\right)^T, \cdots, x_m\left(t\right)^T\right)^T. 
\end{equation}
We also define $\bar{\mathbf{x}}(t) \in \mathbb{R}^{m \times d}$ and $\mathbf{x}_* \in \mathbb{R}^{m\times d}$ by
\begin{equation*}
\bar{\mathbf{x}}(t) = \left(\bar{x}\left(t \right)^T, \cdots, \bar{x}\left( t\right)^T \right)^T\quad \textrm{and}\quad \mathbf{x}_* = \left( x_*^T, \cdots, x_*^T \right)^T,
\end{equation*}
where $\bar{x}(t) = \frac{1}{m} \sum_{k=1}^m x_k(t)$. We denote by the constant $R>0$ the uniform upper bound for the quantities $\left\|\bar{\mathbf{x}}(t) - \mathbf{x}_*\right\|$ and $\left\|\mathbf{x}(t)- \bar{\mathbf{x}}(t)\right\|$ for $t\geq 0$. The existence of such a constant $R>0$ will be proved in Section \ref{sec4}. For notational convenience, we also define the following constants:
\begin{equation*}
\begin{split}
&d= 2LR + \sqrt{m}D, \ R_0 = \|\mathbf{x}(0) - \overline{\mathbf{x}}(0)\| + \frac{d\alpha (0)}{1-\beta},  \\
& \eta= \frac{\mu L}{\mu +L}, \ Q_1 = \Big( \frac{w+1}{w}\Big)^{2p}, \ Q_0 =\frac{Q_1Ld2^p}{\eta}.
\end{split}
\end{equation*}

In order to investigate the convergence properties of the sequence $\left\{x_i (t)\right\}_{t \in \mathbb{N}_0}$ generated by \eqref{eq-1-1}, we split the error $\|\mathbf{x}(t) - \mathbf{x}_*\|$ using the following equality: 
\begin{equation*}
\begin{split}
 \|\mathbf{x}(t) -\mathbf{x}_*\|^2 &=   \sum^m_{i=1} \|x_i(t)-x_*\|^2  
 \\
 & = \sum_{i=1}^m \|x_i (t) - \bar{x}(t)\|^2 + \sum_{i=1}^m \|\bar{x}(t) -x_*\|^2
 \\
  &= \|\mathbf{x}(t) -\bar{\mathbf{x}}(t)\|^2 +  \|\bar{\mathbf{x}}(t)-\mathbf{x}_*\|^2,
\end{split}
\end{equation*}
whose right hand side consists of the consensus error and the distance between the average $\bar{x}(t)$ and the optimal point $\mathbf{x}_*$.
The following theorem provides a sharp estimate on the consensus error $\|\mathbf{x}(t) - \bar{\mathbf{x}}(t)\|$.
\begin{thm}\label{cor-3-4}
Suppose that  Assumptions \ref{LS} - \ref{ass-1-1}  hold, and  $\{\alpha(t)\}_{t\in\mathbb{N}_0}$ is non-increasing sequence satisfying
$$
\alpha(0) \leq \min\left\{\frac{2}{\mu+L},\frac{\eta (1-\beta)}{L ( \eta + L )}\right\}.
$$
Then, for all $t\geq0$ we have
\begin{equation}\label{3-12}
\|\mathbf{x}(t) - \bar{\mathbf{x}}(t)\| \leq \frac{d}{1-\beta} \alpha ([t/2])  +\beta^t \|\mathbf{x}(0) - \bar{\mathbf{x}}(0)\| + \frac{\beta^{t/2}d}{1-\beta} \alpha (0).
\end{equation}
\end{thm}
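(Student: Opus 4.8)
The plan is to reduce \eqref{eq-1-1} to a scalar recursion for the consensus error $e(t):=\|\mathbf{x}(t)-\bar{\mathbf{x}}(t)\|$ and then to solve that recursion by unrolling and splitting the resulting geometric sum at $s=[t/2]$. Collecting the local gradients into the matrix $\mathbf{g}(t)\in\mathbb{R}^{m\times d}$ whose $i$-th row is $\nabla f_i(x_i(t))^T$, the iteration \eqref{eq-1-1} becomes $\mathbf{x}(t+1)=W\mathbf{x}(t)-\alpha(t)\mathbf{g}(t)$. Because $W$ is doubly stochastic (Assumption \ref{ass-1-1}), the operator $\frac1m\mathbf{1}\mathbf{1}^T$ interacts appropriately with $W$ and fixes $\bar{\mathbf{x}}(t)$; subtracting the row-averaged iteration from the full one and using $(W-\frac1m\mathbf{1}\mathbf{1}^T)\bar{\mathbf{x}}(t)=0$ yields
\begin{equation*}
\mathbf{x}(t+1)-\bar{\mathbf{x}}(t+1)=\Big(W-\tfrac1m\mathbf{1}\mathbf{1}^T\Big)\big(\mathbf{x}(t)-\bar{\mathbf{x}}(t)\big)-\alpha(t)\Big(I-\tfrac1m\mathbf{1}\mathbf{1}^T\Big)\mathbf{g}(t).
\end{equation*}

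Next I would bound the two terms separately. Lemma \ref{lem-1-1} applies verbatim to the first term and gives $\|(W-\frac1m\mathbf{1}\mathbf{1}^T)(\mathbf{x}(t)-\bar{\mathbf{x}}(t))\|\le \beta\,e(t)$. For the second term I use that $P:=I-\frac1m\mathbf{1}\mathbf{1}^T$ is an orthogonal projection, so that $\|P\mathbf{g}(t)\|\le\|\mathbf{g}(t)-\nabla\mathbf{f}(\mathbf{x}_*)\|+\|\nabla\mathbf{f}(\mathbf{x}_*)\|$, where $\nabla\mathbf{f}(\mathbf{x}_*)$ is the matrix with $i$-th row $\nabla f_i(x_*)^T$. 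By $L$-smoothness (Assumption \ref{LS}) the first norm is at most $L\|\mathbf{x}(t)-\mathbf{x}_*\|\le L(\|\mathbf{x}(t)-\bar{\mathbf{x}}(t)\|+\|\bar{\mathbf{x}}(t)-\mathbf{x}_*\|)\le 2LR$, using the uniform bound $R$ from Section \ref{sec4}, while the second is at most $\sqrt{m}\,D$ by the definition of $D$. Hence $\|P\mathbf{g}(t)\|\le 2LR+\sqrt{m}\,D=d$, and taking norms in the displayed identity produces the scalar recursion $e(t+1)\le\beta\,e(t)+d\,\alpha(t)$.

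The last step is to solve this recursion. Iterating gives $e(t)\le\beta^t e(0)+d\sum_{s=0}^{t-1}\beta^{t-1-s}\alpha(s)$, and I would split the sum at $s=[t/2]$. On the tail $[t/2]\le s\le t-1$ monotonicity of $\alpha$ gives $\alpha(s)\le\alpha([t/2])$ while the geometric weights sum to at most $1/(1-\beta)$, producing the main term $\frac{d}{1-\beta}\alpha([t/2])$. On the head $0\le s\le[t/2]-1$ I bound $\alpha(s)\le\alpha(0)$ and reindex, observing that the weights sum to at most $\beta^{\lceil t/2\rceil}/(1-\beta)\le\beta^{t/2}/(1-\beta)$, which yields the transient $\frac{\beta^{t/2}d}{1-\beta}\alpha(0)$. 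Adding the three contributions gives exactly \eqref{3-12}.

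I expect the splitting in the final step to be the only delicate point: it must be arranged so that the slowly decaying term $\alpha([t/2])$ (which carries the genuine $O(t^{-p})$ rate, being comparable to $\alpha(t)$) and the exponentially small transient $\beta^{t/2}\alpha(0)$ emerge with the stated constants. Everything else is a direct computation; I only note that the bound $d$, and hence the entire estimate, rests on the a priori uniform bound $R$, so this result is logically downstream of the boundedness established in Section \ref{sec4}, where the hypotheses on $\alpha(0)$ are precisely what guarantee the existence of $R$.
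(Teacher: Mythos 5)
Your proposal is correct and follows essentially the same route as the paper: it reproduces the derivation behind Lemma \ref{est2} (contraction via Lemma \ref{lem-1-1} plus the $L$-smoothness/$D$ bound on the projected gradients), invokes the uniform bound $R$ from Theorem \ref{bound} to obtain the scalar recursion $e(t+1)\leq \beta e(t)+d\alpha(t)$, and then unrolls and splits the geometric sum at $[t/2]$ exactly as in the paper's proof. The only cosmetic difference is that you bound the gradient term through $\|\mathbf{x}(t)-\mathbf{x}_*\|$ directly rather than via $\bar{\mathbf{x}}(t)$, which yields the same constant $d=2LR+\sqrt{m}D$.
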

This theorem implies that the consensus is achieved with a rate depending on the stepsize $\alpha(t)$. Next we establish the convergence results for $\|\bar{\mathbf{x}}(t) -\mathbf{x}_*\|$. Firstly we state the result for $p \in (0,1)$.
\begin{thm}\label{main}
Suppose that Assumptions \ref{LS} - \ref{ass-1-1} hold. Let $p\in (0,1)$ and assume that $\alpha(t) = \frac{a}{(t+w)^p}$ with  constants $a>0$ and $w\geq 1$ satisfying  
\begin{equation}\label{eq-3-40}
\alpha(0) = \frac{a}{w^p}\leq \min\left\{\frac{2}{\mu+L},\frac{\eta (1-\beta)}{L ( \eta + L )}\right\}.
\end{equation}
Then we have
\begin{equation}\label{eq-2-40}
\begin{split}
 \|\bar{\mathbf{x}}(t) -\mathbf{x}_*\| \leq    \frac{(Q_0 \sqrt{e}) \, a}{1-\beta}  \cdot \Big( [t/2] + w-1\Big)^{-p} +  Y_1 (t) +Y_2 (t) + Y_3(t),
\end{split}
\end{equation}
where
\begin{equation*}
\begin{split}
Y_1 (t) & = e^{-\sum_{s=0}^{t-1} \frac{\eta a}{(s+w)^p}} \|\mathbf{x}(0) -\mathbf{x}_*\|
\\
Y_2 (t) & = \frac{Q_0  }{1-\beta} e^{-\frac{\eta a}{2}\cdot \frac{t}{(t+w)^p}} \sum_{s=1}^{[t/2]-1} \frac{a^2}{(w+s)^{2p}}
\\
Y_3(t)& = \frac{aLR_0}{(1-\beta)w^p}\cdot \bigg( \frac{e^{ -\frac{\eta a}{2 } \frac{(t-1)}{(t+w)^p}}}{1-\sqrt{\beta}} + \frac{\sqrt{\beta}^{[(t-1)/2]}}{1-\sqrt{\beta}}\bigg).
\end{split}
\end{equation*}
Here $[z]$ for $z \in \mathbb{R}$ denotes the largest integer less or equal to $z$.
It is easy to see that for any fixed $N>0$, there exists a constant $C_N>0$ independent of $t\geq 0$ such that 
\begin{equation*}
Y_1 (t) + Y_2 (t) + Y_3(t) \leq C_N t^{-N}.
\end{equation*}
\end{thm}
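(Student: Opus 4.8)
The plan is to pass to the averaged dynamics and reduce the claim to a scalar linear recursion driven by the consensus error, which Theorem \ref{cor-3-4} already controls. Averaging \eqref{eq-1-1} over $i$ and using that $W$ is doubly stochastic (Assumption \ref{ass-1-1}), the consensus step leaves the mean invariant, so $\bar{x}(t+1) = \bar{x}(t) - \frac{\alpha(t)}{m}\sum_{i=1}^m \nabla f_i(x_i(t))$. Writing $\frac{1}{m}\sum_i \nabla f_i(\bar{x}(t)) = \nabla f(\bar{x}(t))$, I would decompose the increment as a centralized gradient step on $f$ at $\bar{x}(t)$ plus the defect $\frac{\alpha(t)}{m}\sum_i[\nabla f_i(x_i(t)) - \nabla f_i(\bar{x}(t))]$, whose norm is at most $\frac{L}{\sqrt{m}}\|\mathbf{x}(t)-\bar{\mathbf{x}}(t)\|$ by $L$-smoothness (Assumption \ref{LS}) and Cauchy--Schwarz.

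Next I would invoke the standard strongly convex/smooth contraction: under Assumption \ref{sc} and \eqref{L-smooth}, co-coercivity gives, for $0<\alpha(t)\leq \tfrac{2}{\mu+L}$, the one-step estimate $\|y - \alpha(t)\nabla f(y) - x_*\| \leq (1-\eta\alpha(t))\|y-x_*\|$ with $\eta = \frac{\mu L}{\mu+L}$ (this is exactly why the first bound in \eqref{eq-3-40} is imposed). Applying this with $y=\bar{x}(t)$ and multiplying by $\sqrt{m}$ yields the scalar recursion
\begin{equation*}
\|\bar{\mathbf{x}}(t+1)-\mathbf{x}_*\| \leq (1-\eta\alpha(t))\,\|\bar{\mathbf{x}}(t)-\mathbf{x}_*\| + L\,\alpha(t)\,\|\mathbf{x}(t)-\bar{\mathbf{x}}(t)\|.
\end{equation*}
Unrolling this and bounding each product $\prod_{\tau=s+1}^{t-1}(1-\eta\alpha(\tau)) \leq \exp\bigl(-\eta\sum_{\tau=s+1}^{t-1}\alpha(\tau)\bigr)$, I obtain $\|\bar{\mathbf{x}}(t)-\mathbf{x}_*\|$ bounded by a homogeneous term $e^{-\eta\sum_{s=0}^{t-1}\alpha(s)}\|\bar{\mathbf{x}}(0)-\mathbf{x}_*\|$ (which is $\leq Y_1(t)$ since $\|\bar{\mathbf{x}}(0)-\mathbf{x}_*\|\leq\|\mathbf{x}(0)-\mathbf{x}_*\|$) plus the convolution $\sum_{s=0}^{t-1} e^{-\eta\sum_{\tau=s+1}^{t-1}\alpha(\tau)}\, L\alpha(s)\,\|\mathbf{x}(s)-\bar{\mathbf{x}}(s)\|$. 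Substituting the three-term consensus bound \eqref{3-12} splits this convolution into three sums.

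The main term arises from the piece $\frac{d}{1-\beta}\alpha([s/2])$ of \eqref{3-12}: using that $\alpha([s/2])$ is comparable to $2^{p}\alpha(s)$ up to the factor encoded in $Q_1$ (the origin of $Q_1$ and the $2^p$ in $Q_0$) and the sequential-inequality lemmas established in Section \ref{section3} and Appendix \ref{app-a} to sum the geometric-type weights against $\alpha(s)^2$, I expect to reproduce the leading term $\frac{(Q_0\sqrt{e})a}{1-\beta}([t/2]+w-1)^{-p}$ together with the tail $Y_2(t)$ obtained by cutting the sum at $s=[t/2]$ (so the exponential weight on the low-index block is $\leq e^{-\frac{\eta a}{2}\frac{t}{(t+w)^p}}$ and the residual factor is $\sum_{s=1}^{[t/2]-1}\alpha(s)^2$). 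The two remaining sums, coming from $\beta^s\|\mathbf{x}(0)-\bar{\mathbf{x}}(0)\|$ and $\frac{\beta^{s/2}d}{1-\beta}\alpha(0)$, are geometric in $s$; absorbing $\alpha(s)\leq\alpha(0)=a/w^p$ and collecting them through the definition of $R_0$ gives $Y_3(t)$, again after splitting at $s\approx (t-1)/2$ into a centralized-decay part and a consensus-decay part. I anticipate this convolution estimate — matching the precise constants $Q_0,Q_1,\sqrt{e}$ and the half-index split — to be the main technical obstacle, which is precisely what the sequential lemmas of Section \ref{section3} are built to handle.

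Finally, for the super-polynomial decay of $Y_1+Y_2+Y_3$, I would use $0<p<1$. Comparison with an integral gives $\sum_{s=0}^{t-1}(s+w)^{-p} \geq c_0\,t^{1-p}$ for large $t$, so $Y_1(t)\leq C e^{-\eta a c_0 t^{1-p}}$; likewise $\frac{t}{(t+w)^p}\geq 2^{-p}t^{1-p}$ for $t\geq w$, so the exponential prefactors in $Y_2$ and in the first term of $Y_3$ are $\leq e^{-c\,t^{1-p}}$, while $\sum_{s=1}^{[t/2]-1}(w+s)^{-2p}$ grows at most polynomially and $\sqrt{\beta}^{[(t-1)/2]}$ decays geometrically. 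Since $1-p>0$, each factor $e^{-c\,t^{1-p}}$ (and the geometric $\sqrt{\beta}^{[(t-1)/2]}$) beats every polynomial, i.e. $t^{N}e^{-c t^{1-p}}\to 0$ for all $N$; taking $C_N = \sup_{t\geq 0} t^{N}\bigl(Y_1(t)+Y_2(t)+Y_3(t)\bigr)$, which is finite, then yields $Y_1(t)+Y_2(t)+Y_3(t)\leq C_N t^{-N}$ and completes the proof.
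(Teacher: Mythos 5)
Your proposal is correct and follows essentially the same route as the paper: the contraction inequality $\|\bar{\mathbf{x}}(t+1)-\mathbf{x}_*\| \leq (1-\eta\alpha(t))\|\bar{\mathbf{x}}(t)-\mathbf{x}_*\| + L\alpha(t)\|\mathbf{x}(t)-\bar{\mathbf{x}}(t)\|$ (the paper's Lemma \ref{est1}), combined with the consensus bound of Theorem \ref{cor-3-4}, then unrolled with $1-x\leq e^{-x}$ and the resulting sums split at the half index and estimated by the sequential-inequality machinery of Appendix \ref{app-a}. The paper merely packages your direct convolution argument as two auxiliary recursions $G(t)$ and $K(t)$ handled by Proposition \ref{convergencerate} and Lemma \ref{lem-a-2}, which is the same computation by superposition, and your identification of the terms ($Y_1$ homogeneous, leading term plus $Y_2$ from the polynomial forcing, $Y_3$ from the geometric forcing via $R_0$) and of the super-polynomial decay matches the paper's.
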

In the estimate \eqref{eq-2-40}, we see that if $a/w^p$ is large, then the exponential terms in $Y_1(t), Y_2(t)$ and $Y_3(t)$ decrease fast but the first term in \eqref{eq-2-40} becomes large. On the other hand, if we choose small $a/w^p >0$, the first term in \eqref{eq-2-40} gets small, but the exponential terms decrease slowly. These imply that depending on the choice of $a$ and $w$ we can have faster convergence at the expense of a large error in the early stage, or the other way around.   

Next we state the result for the case $p=1$.
\begin{thm}\label{thm-2-4}
Suppose that Assumptions \ref{LS} - \ref{ass-1-1} hold. Let $p=1$ and assume that $\alpha(t) = \frac{a}{(t+w) }$ with positive constants $a>0$ and $w\geq 1$ satisfying  
\begin{equation}\label{eq-3-41}
\alpha(0) = \frac{a}{w }\leq \min\left\{\frac{2}{\mu+L},\frac{\eta (1-\beta)}{L ( \eta + L )}\right\}.
\end{equation}
 If $a>\frac{2}{\eta} $, then we have 
\begin{equation}\label{eq-2-41}
\begin{split}
&\|\bar{\mathbf{x}}(t) -\mathbf{x}_*\|   \leq \Big( \frac{w}{t+w}\Big)^{\eta a}\|\mathbf{x}(0) - \mathbf{x}_*\|  + \frac{2\sqrt{e}Q_0}{(1-\beta) }\cdot\frac{a}{(t+w+1)}  +Y_4(t),
\end{split}
\end{equation}
where
\begin{equation*}
Y_4(t) = \frac{aLR_0}{1-\beta}\left[ \frac{w(w+1)^{l-1}}{1-e^{l/(w+1)}\sqrt{\beta}}\cdot \frac{1}{(t+w)^{l+1}} + \frac{\sqrt{\beta}^t}{(t-1+w)}\right].
\end{equation*}
Here $l$ is any value such that $0 \leq l \leq a-1$ and $e^{l/(w+1)} \sqrt{\beta} <1$.
\end{thm}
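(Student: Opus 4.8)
The plan is to reduce the analysis of $\|\bar{\mathbf{x}}(t)-\mathbf{x}_*\|$ to a scalar recursion driven by the consensus error, and then to resolve that recursion in the borderline regime $p=1$. First I would exploit the double stochasticity of $W$ (Assumption \ref{ass-1-1}): averaging the update \eqref{eq-1-1} over $i$ makes the consensus step invisible, so that $\bar{x}(t+1)=\bar{x}(t)-\frac{\alpha(t)}{m}\sum_{i=1}^m\nabla f_i(x_i(t))$. Writing $\frac{1}{m}\sum_i\nabla f_i(x_i(t))=\nabla f(\bar{x}(t))+e(t)$, the $L$-smoothness of Assumption \ref{LS} together with Cauchy--Schwarz gives $\|e(t)\|\leq\frac{L}{\sqrt m}\|\mathbf{x}(t)-\bar{\mathbf{x}}(t)\|$. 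Thus $\bar{x}(t)$ evolves as a centralized gradient step for $f$ perturbed by the consensus error.

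Next I would establish the contraction of the centralized step. Using $\mu$-strong convexity (Assumption \ref{sc}) and $L$-smoothness through the co-coercivity inequality $\inp{\nabla f(x)}{x-x_*}\geq\eta\|x-x_*\|^2+\frac{1}{\mu+L}\|\nabla f(x)\|^2$, together with the hypothesis $\alpha(0)\leq\frac{2}{\mu+L}$, one obtains $\|\bar{x}(t)-\alpha(t)\nabla f(\bar{x}(t))-x_*\|\leq(1-\eta\alpha(t))\|\bar{x}(t)-x_*\|$. Combining this with the perturbation bound and multiplying by $\sqrt m$ yields the master recursion
\[
\|\bar{\mathbf{x}}(t+1)-\mathbf{x}_*\|\leq(1-\eta\alpha(t))\,\|\bar{\mathbf{x}}(t)-\mathbf{x}_*\|+\alpha(t)\,L\,\|\mathbf{x}(t)-\bar{\mathbf{x}}(t)\|.
\]
Into the last term I would substitute the consensus estimate \eqref{3-12} of Theorem \ref{cor-3-4}. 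Since $w\geq1$ forces $[t/2]+w\geq\frac{t+w}{2}$, the dominant part of that estimate satisfies $\alpha(t)L\cdot\frac{d}{1-\beta}\alpha([t/2])\leq\frac{Ld2^p}{1-\beta}\alpha(t)^2$, which (up to the harmless factor $Q_1$) is exactly the $\alpha(t)^2$ forcing term whose coefficient is encoded in $Q_0$; the remaining $\beta^t$ and $\beta^{t/2}$ contributions are the seeds of $Y_4(t)$.

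It then remains to solve the recursion $u(t+1)\leq(1-\eta\alpha(t))u(t)+C\alpha(t)^2+(\text{exponential terms})$ with $\alpha(t)=a/(t+w)$. Unrolling and bounding the products by $\prod_{\tau=s}^{t-1}(1-\eta\alpha(\tau))\leq e^{-\eta a\sum_{\tau=s}^{t-1}(\tau+w)^{-1}}\approx\big(\frac{s+w}{t+w}\big)^{\eta a}$ turns the homogeneous part into $\big(\frac{w}{t+w}\big)^{\eta a}\|\mathbf{x}(0)-\mathbf{x}_*\|$, using $\|\bar{\mathbf{x}}(0)-\mathbf{x}_*\|\leq\|\mathbf{x}(0)-\mathbf{x}_*\|$. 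The $\alpha(t)^2$ forcing produces the leading term through the sum $\frac{1}{(t+w)^{\eta a}}\sum_s(s+w)^{\eta a-2}$, and here I would invoke the sequential inequality lemmas of Section \ref{section3}: in the critical case $p=1$ this sum is of the correct order $O(1/(t+w))$ precisely because $a>2/\eta$ makes $\eta a-2>0$, so the sum is dominated by its last $O((t+w)^{\eta a-1})$ terms, matching the factor $\frac{2\sqrt e\,Q_0}{1-\beta}\cdot\frac{a}{t+w+1}$. Finally the $\beta^{s/2}$-type terms, convolved against the polynomially decaying contraction factors, give mixed geometric--polynomial sums; choosing the free parameter $l$ with $0\leq l\leq a-1$ and $e^{l/(w+1)}\sqrt\beta<1$ balances the geometric decay $\sqrt\beta^{\,s}$ against the polynomial weight and collapses these into $Y_4(t)$.

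The main obstacle is this last step in the borderline regime $p=1$. Unlike $p<1$, the stepsizes are not summable, so $\sum_\tau\alpha(\tau)$ diverges and the contraction factors decay only polynomially rather than geometrically; the whole rate hinges on the sharp matching of the exponent $\eta a-2$ in the forcing sum, which is exactly why the extra hypothesis $a>\frac{2}{\eta}$ is needed and why the cross-terms must be handled through the parameter-$l$ balancing rather than a crude geometric bound.
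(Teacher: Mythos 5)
Your proposal is correct and follows essentially the same route as the paper's proof: the master recursion of Lemma \ref{est1}, the consensus bound of Theorem \ref{cor-3-4} combined via the inequality $[t/2]+w\geq\frac{t+w}{2}$, unrolling the products into factors $\big(\frac{s+w}{t+w}\big)^{\eta a}$ exactly as in Proposition \ref{convergencerate} (case $p=1$, $q<C_1$), and the $l$-parameter balancing of Lemma \ref{lem-a-2} for the mixed geometric--polynomial cross-terms that form $Y_4(t)$. One minor imprecision: the hypothesis $a>\frac{2}{\eta}$ is not what makes the forcing sum have order $O(1/(t+w))$ (that already holds once $\eta a>1$); in the paper it is used to absorb the factor $\frac{1}{\eta a-1}$ via $\frac{1}{C-1}\leq\frac{2}{C}$, producing the clean constant $\frac{2\sqrt{e}\,Q_0}{1-\beta}\cdot a$ in \eqref{eq-2-41}.
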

Similarly as in Theorem \ref{main}, we find that if $a/w$ is small, then the second term in the right hand side of \eqref{eq-2-41} is small, but the first term decays slowly. On the contrary, if $a/w$ is large, then the second term is significant while the first term decays fast.

\begin{rem}
The bound $\frac{\eta (1-\beta)}{L ( \eta + L )}$ of \eqref{eq-3-40} and \eqref{eq-3-41} is only used to prove the uniform boundedness of the sequences $x_i (t)$ for $1 \leq i \leq n$ and $t \in \mathbb{N}$. If there is a prior guarantee that the sequence $x_i (t)$ are uniformly bounded, then the bound $\frac{2}{\mu + L}$ of \eqref{eq-3-40} and \eqref{eq-3-41} is sufficient for the convergence estimate of the above theorem. 

Interestingly, the bound $\frac{\eta (1-\beta)}{L ( \eta + L )}$ is quite sharp for certain examples as revealed in Subsection \ref{sec4-1}. In our example, we consider a strongly convex total cost function which consists of convex and non-convex  local cost functions. The uniform boundedness might be achieved for larger values of $\alpha(0)$ under some suitable conditions of the cost functions, e.g. convexity on each local function (see \cite{YLY}). 
\end{rem}

\section{Sequential Estimates}\label{section3}
In this section we derive sequential estimates of $\|\bar{\mathbf{x}}(t) - \mathbf{x}_*\|$ and $\|\mathbf{x}(t) - \bar{\mathbf{x}}(t)\|$, which are  essential for establishing our main results. 
By summing up \eqref{eq-1-1} for $1\leq i \leq n$, we have
\begin{equation}\label{scheme3}
\bar{x}(t+1) = \bar{x}(t) - \frac{\alpha(t)}{m}\sum^m_{i=1} \nabla f_i(x_i(t)).
\end{equation}
Thanks to \eqref{eq-2-4}, we may write \eqref{eq-1-1} in a compact form as
\begin{equation}\label{scheme2}
\mathbf{x}(t+1) =W \mathbf{x}(t) - \alpha(t) \nabla f(\mathbf{x}(t)),
\end{equation}
where 
$$
 \nabla f(\mathbf{x}(t)) = \left(\nabla f_1( x_1(t))^T, \nabla f_2( x_2(t))^T, \cdots, \nabla f_m( x_m(t))\right)^T.
$$
In the following lemma, we obtain a bound of $\|\bar{\mathbf{x}}(t+1)-\mathbf{x}_*\|$ in terms of $\|\bar{\mathbf{x}}(t)-\mathbf{x}_*\|$ and $\|\mathbf{x}(t) - \bar{\mathbf{x}}(t)\|$.
\begin{lem}\label{est1}
Suppose that Assumptions \ref{LS} - \ref{ass-1-1} hold. If $\{\alpha(t)\}_{t\in\mathbb{N}_0}$ is non-increasing stepsize satisfying $\alpha(0)\leq \frac{2}{\mu+L},$ then we have
\begin{equation}\label{estimate1}
\|\mathbf{\bar{x}}(t+1)-\mathbf{x}_*\| \leq \left(1-{\eta \alpha(t)} \right)\| \mathbf{\bar{x}}(t)-\mathbf{x}_*\| + L\alpha(t) \| \mathbf{x}(t)-\mathbf{\bar{x}}(t)\|.
\end{equation}
\end{lem}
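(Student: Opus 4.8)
The plan is to compare the averaged iteration \eqref{scheme3} against an ideal centralized gradient step on $f$ and to absorb the discrepancy into the consensus error. Writing $g(t) = \frac{1}{m}\sum_{i=1}^m \nabla f_i(x_i(t))$ for the averaged gradient actually used and recalling $\nabla f(\bar{x}(t)) = \frac{1}{m}\sum_{i=1}^m \nabla f_i(\bar{x}(t))$, I would begin from the decomposition
\[
\bar{x}(t+1) - x_* = \big(\bar{x}(t) - x_* - \alpha(t)\nabla f(\bar{x}(t))\big) + \alpha(t)\big(\nabla f(\bar{x}(t)) - g(t)\big).
\]
The first parenthesis is a genuine gradient-descent step on $f$ from $\bar{x}(t)$, while the second is the error incurred by evaluating each $\nabla f_i$ at $x_i(t)$ instead of at $\bar{x}(t)$. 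Applying the triangle inequality then reduces the estimate to a contraction bound for the first term and an error bound for the second.

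For the contraction term the key input is the co-coercivity inequality for a $\mu$-strongly convex, $L$-smooth function $f$; here $f$ is $L$-smooth because $L = \max_i L_i$ and $\nabla f$ is the average of the $\nabla f_i$. The inequality reads, for all $x,y$,
\[
\langle \nabla f(y) - \nabla f(x), y - x\rangle \geq \eta\|y-x\|^2 + \frac{1}{\mu+L}\|\nabla f(y) - \nabla f(x)\|^2,
\]
with $\eta = \frac{\mu L}{\mu+L}$. Taking $y = \bar{x}(t)$, $x = x_*$ and using $\nabla f(x_*) = 0$, I would expand the square and substitute this bound to obtain
\[
\|\bar{x}(t) - x_* - \alpha(t)\nabla f(\bar{x}(t))\|^2 \leq (1 - 2\eta\alpha(t))\|\bar{x}(t) - x_*\|^2 + \alpha(t)\Big(\alpha(t) - \tfrac{2}{\mu+L}\Big)\|\nabla f(\bar{x}(t))\|^2.
\]
Since $\{\alpha(t)\}$ is non-increasing with $\alpha(0)\leq \frac{2}{\mu+L}$, the last coefficient is nonpositive and may be dropped; then $1 - 2\eta\alpha(t) \leq (1 - \eta\alpha(t))^2$ together with $\eta\alpha(t) \leq \tfrac12 < 1$ (which follows from $\eta \leq \frac{\mu+L}{4}$ by AM--GM and $\alpha(t)\leq \frac{2}{\mu+L}$) gives, after taking square roots, the contraction $\|\bar{x}(t) - x_* - \alpha(t)\nabla f(\bar{x}(t))\| \leq (1 - \eta\alpha(t))\|\bar{x}(t) - x_*\|$.

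For the error term I would use the triangle inequality and $L_i$-smoothness,
\[
\alpha(t)\|\nabla f(\bar{x}(t)) - g(t)\| \leq \frac{\alpha(t)}{m}\sum_{i=1}^m \|\nabla f_i(\bar{x}(t)) - \nabla f_i(x_i(t))\| \leq \frac{\alpha(t)L}{m}\sum_{i=1}^m \|\bar{x}(t) - x_i(t)\|,
\]
and a single Cauchy--Schwarz step to turn the $\ell_1$ sum into $\frac{\alpha(t)L}{\sqrt{m}}\|\mathbf{x}(t) - \bar{\mathbf{x}}(t)\|$. Finally I would assemble the two bounds for the scalar quantity $\|\bar{x}(t+1) - x_*\|$ and multiply through by $\sqrt{m}$, using $\|\bar{\mathbf{x}}(t) - \mathbf{x}_*\| = \sqrt{m}\,\|\bar{x}(t) - x_*\|$, to recover \eqref{estimate1}. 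I expect the main obstacle to be the contraction estimate: one must invoke the correct co-coercivity inequality and check both that the step-size restriction $\alpha(t)\leq \frac{2}{\mu+L}$ annihilates the gradient-squared term and that $1 - \eta\alpha(t)$ stays nonnegative, so that the square root is legitimate; the error term is routine once the block-norm bookkeeping is handled carefully.
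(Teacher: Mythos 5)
Your proposal is correct and follows essentially the same route as the paper's proof: the same splitting of $\bar{x}(t+1)-x_*$ into a centralized gradient step at $\bar{x}(t)$ plus a consensus-error term, the same co-coercivity inequality (with the step-size condition $\alpha(t)\leq \frac{2}{\mu+L}$ killing the gradient-squared term and $1-2\eta\alpha(t)\leq(1-\eta\alpha(t))^2$), the same $L_i$-smoothness plus Cauchy--Schwarz bound on the error term, and the same final rescaling by $\sqrt{m}$. Your explicit check that $\eta\alpha(t)\leq\frac12$, so that the square root is legitimate, is a detail the paper leaves implicit.
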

\begin{proof}
Using the triangle inequality, we deduce
\begin{equation}\label{eq-3-3}
\begin{split}
&\|\bar{x}(t+1)-x_*\| 
\\
&= \left\| \bar{x}(t)-x_*- \frac{\alpha(t)}{m}\sum^m_{i=1} \nabla f_i(x_i(t))\right\| \\
&\leq \left\| \bar{x}(t)-x_*- \frac{\alpha(t)}{m}\sum^m_{i=1} \nabla f_i(\bar{x}(t))\right\|  + \frac{\alpha(t)}{m}\sum^m_{i=1} \|\nabla f_i(\bar{x}(t)) - \nabla f_i(x_i(t))\|.
\end{split}
\end{equation}
We first estimate the first term in the last inequality of \eqref{eq-3-3}.
Note that
\begin{align*}
&\| \bar{x}(t)-x_*-  \alpha(t) \nabla f (\bar{x}(t))\|^2 \\
&=\| \bar{x}(t)-x_*\|^2 -  2\alpha(t)( \bar{x}(t)-x_*)^T\cdot\nabla f(\bar{x}(t))+  {\alpha(t)^2}\| \nabla f(\bar{x}(t)\|^2.
\end{align*}
By the assumptions \ref{LS} and \ref{sc}, the cost function $f$ is $L$-smooth and $\mu$-strongly convex. Thus we have the following inequality (see e.g., \cite[Lemma 3.11]{Bu}):
$$
(\bar{x}(t)-x_*)^T\cdot(\nabla f (\bar{x}(t))-\nabla f (x_*)) \geq \frac{\mu L}{\mu+L}\|\bar{x}(t)-x_*\|^2 + \frac{1}{\mu+L}\|\nabla f (\bar{x}(t)) - \nabla f  (x_*)\|^2.
$$
Inserting this into the above equality yields
\begin{align*}
&\| \bar{x}(t)-x_*-  {\alpha(t)}   \nabla f (\bar{x}(t))\|^2 \\
&\leq \bigg(1-\frac{2\mu L\alpha(t)}{\mu+L} \bigg)\| \bar{x}(t)-x_*\|^2  + \bigg(\alpha(t)^2-\frac{2\alpha(t)}{\mu+L}\bigg)  \|\nabla f (\bar{x}(t)) \|^2.
\end{align*}
Using $\alpha(t)\leq \frac{2}{\mu+L}$ and  letting $\eta = \frac{\mu L}{\mu +L}$, we have
\begin{equation}\label{eq-3-1}
\begin{split}
\left\| \bar{x}(t)-x_*- \frac{\alpha(t)}{m}\sum^m_{i=1} \nabla f_i(\bar{x}(t))\right\|^2  &\leq \bigg(1-\frac{2\mu L\alpha(t)}{\mu+L} \bigg)\| \bar{x}(t)-x_*\|^2 
\\
&\leq \Big(1-\eta\alpha (t)\Big)^2\| \bar{x}(t)-x_*\|^2.
\end{split}
\end{equation}
Next we bound the second term in the last inequality of $\eqref{eq-3-3}$
 
By $L_i$-smoothness of $f_i$, we have
\begin{equation}\label{eq-3-2}
\sum^m_{i=1}\|\nabla f_i(\bar{x}(t)) - \nabla f_i(x_i(t))\|\leq L\sum^m_{i=1}\|\bar{x}(t)-x_i(t)\|  \leq L\sqrt{m}\| \mathbf{x}(t)-\mathbf{\bar{x}}(t)\|,
\end{equation}
where $L=\max_{1\leq i \leq n} L_i$. 
Putting \eqref{eq-3-1} and \eqref{eq-3-2} into \eqref{eq-3-3}, we obtain
\begin{align*}
\|\bar{x}(t+1)-x_*\| &\leq (1-\eta\alpha(t) )\| \bar{x}(t)-x_*\| + \frac{L\alpha(t)}{\sqrt{m}} \| \mathbf{x}(t)-\mathbf{\bar{x}}(t)\|.
\end{align*}
 Using the fact $\|\mathbf{\bar{x}}(t)-\mathbf{x}_* \|= \sqrt{m}\|\bar{x}(t)-x_*\|$, the above inequality gives
\begin{equation*}
\|\mathbf{\bar{x}}(t+1)-\mathbf{x}_*\| \leq (1-\eta\alpha(t) )\| \mathbf{\bar{x}}(t)-\mathbf{x}_*\| + L\alpha(t) \| \mathbf{x}(t)-\mathbf{\bar{x}}(t)\|.
\end{equation*}
The proof is finished.
\end{proof}

Next we establish a bound of $\|\mathbf{x}(t+1)-\mathbf{\bar{x}}(t+1)\|$ in terms of  $\|\mathbf{x}(t)-\mathbf{\bar{x}}(t)\|$ and $\|\bar{\mathbf{x}}(t)-\mathbf{x}_*\|$.  
\begin{lem}\label{est2}
Suppose that Assumptions \ref{LS} - \ref{ass-1-1} hold. Then we have
\begin{equation}\label{estimate2}
\begin{split}
&\|\mathbf{x}(t+1)-\mathbf{\bar{x}}(t+1)\|\leq  (\beta +  L\alpha(t))\| \mathbf{x}(t)- \mathbf{\bar{x}}(t) \|+   L \alpha(t)\| \mathbf{\bar{x}}(t)-\mathbf{x}_*\| +\sqrt{m}D \alpha(t),
\end{split}
\end{equation}
where $D = \max_{1 \leq i \leq n} \|\nabla f_i (x_*)\|$.
\end{lem}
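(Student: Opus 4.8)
The plan is to project the compact dynamics \eqref{scheme2} onto the subspace orthogonal to the consensus direction, and then to control the residual gradient term through smoothness centered at the optimizer $x_*$. Writing $J = \frac{1}{m}\mathbf{1}\mathbf{1}^T$, we have $\bar{\mathbf{x}}(t) = J\mathbf{x}(t)$, hence $\mathbf{x}(t+1) - \bar{\mathbf{x}}(t+1) = (I-J)\mathbf{x}(t+1)$. Substituting \eqref{scheme2} and using that $W$ is doubly stochastic --- so that $JW = J$ and $W\bar{\mathbf{x}}(t) = \bar{\mathbf{x}}(t)$ --- I would rewrite the consensus part as $(I-J)W\mathbf{x}(t) = W\mathbf{x}(t) - \bar{\mathbf{x}}(t) = W(\mathbf{x}(t) - \bar{\mathbf{x}}(t))$, arriving at the identity
\[
\mathbf{x}(t+1) - \bar{\mathbf{x}}(t+1) = W\big(\mathbf{x}(t) - \bar{\mathbf{x}}(t)\big) - \alpha(t)(I-J)\nabla f(\mathbf{x}(t)).
\]

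Next I would apply the triangle inequality and bound the two terms separately. The first is handled directly by Lemma \ref{lem-1-1}, which gives $\|W(\mathbf{x}(t) - \bar{\mathbf{x}}(t))\| \leq \beta\|\mathbf{x}(t) - \bar{\mathbf{x}}(t)\|$ and supplies the $\beta$ coefficient in \eqref{estimate2}. For the second, since $I-J$ is an orthogonal projection its operator norm is at most $1$, so $\|(I-J)\nabla f(\mathbf{x}(t))\| \leq \|\nabla f(\mathbf{x}(t))\|$; equivalently, subtracting the mean from the stacked block gradients only decreases the sum of squares.

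It then remains to bound $\|\nabla f(\mathbf{x}(t))\|$, which is the crux of the argument and the point at which the constant $D$ enters. I would center each block at the optimizer, $\nabla f_i(x_i(t)) = \big(\nabla f_i(x_i(t)) - \nabla f_i(x_*)\big) + \nabla f_i(x_*)$. By $L_i$-smoothness (Assumption \ref{LS}) the first difference is at most $L\|x_i(t) - x_*\|$, while the second is at most $D$ by definition of $D$. Summing the squares over $i$ and applying Minkowski's inequality in $\ell^2$ yields $\|\nabla f(\mathbf{x}(t))\| \leq L\|\mathbf{x}(t) - \mathbf{x}_*\| + \sqrt{m}\,D$, and the triangle inequality $\|\mathbf{x}(t) - \mathbf{x}_*\| \leq \|\mathbf{x}(t) - \bar{\mathbf{x}}(t)\| + \|\bar{\mathbf{x}}(t) - \mathbf{x}_*\|$ then produces the coefficient $L\alpha(t)$ on both $\|\mathbf{x}(t) - \bar{\mathbf{x}}(t)\|$ and $\|\bar{\mathbf{x}}(t) - \mathbf{x}_*\|$, together with the additive term $\sqrt{m}\,D\,\alpha(t)$. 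Collecting this with the $\beta$-term from the first estimate gives precisely \eqref{estimate2}.

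The main obstacle is the gradient estimate: because no global gradient bound $\|\nabla f_i\|_\infty < \infty$ is assumed here (in contrast to the earlier works), one cannot bound $\nabla f_i(x_i(t))$ by a uniform constant. The resolution is to measure smoothness relative to $x_*$, so that the only irreducible constant is the optimizer residual $D = \max_i\|\nabla f_i(x_*)\|$, and all remaining contributions are linear in the two error quantities $\|\mathbf{x}(t) - \bar{\mathbf{x}}(t)\|$ and $\|\bar{\mathbf{x}}(t) - \mathbf{x}_*\|$, exactly as the recursion \eqref{estimate2} requires.
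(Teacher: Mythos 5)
Your proposal is correct and follows essentially the same route as the paper: the same decomposition of $\mathbf{x}(t+1)-\bar{\mathbf{x}}(t+1)$ into $W(\mathbf{x}(t)-\bar{\mathbf{x}}(t))$ plus the projected gradient term, the same use of Lemma \ref{lem-1-1} and of the non-expansiveness of $I-\frac{1}{m}\mathbf{1}\mathbf{1}^T$, and the same smoothness-based bound on $\|\nabla f(\mathbf{x}(t))\|$ anchored at $x_*$. The only cosmetic difference is that you center each block gradient at $x_*$ directly and split $\|\mathbf{x}(t)-\mathbf{x}_*\|\leq\|\mathbf{x}(t)-\bar{\mathbf{x}}(t)\|+\|\bar{\mathbf{x}}(t)-\mathbf{x}_*\|$ after applying Minkowski, whereas the paper inserts $\bar{x}(t)$ at the per-block level before summing; both give identical constants.
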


\begin{proof}
Combining \eqref{scheme2} and Lemma \ref{lem-1-1} yields
\begin{equation}\label{eq-3-4}
\begin{split}
\|\mathbf{x}(t+1)-\mathbf{\bar{x}}(t+1)\| &= \Big\| W\mathbf{x}(t) - \alpha(t) \nabla f(\mathbf{x}(t))  - \mathbf{\bar{x}(t)} + \frac{\alpha(t)}{m}\big(\mathbf{1}\mathbf{1}^T \nabla f(\mathbf{x}(t))\big)\Big\| \\
&\leq \| W(\mathbf{x}(t)-\mathbf{\bar{x}}(t))\| + \alpha(t)\Big\|(\text{I}-\frac{1}{m}\mathbf{1}\mathbf{1}^T)\nabla f(\mathbf{x}(t))\Big\| \\
&\leq \beta \|\mathbf{x}(t)-\mathbf{\bar{x}}(t)\|  + \alpha(t)\| \nabla f(\mathbf{x}(t))\|.
\end{split}
\end{equation}
Here I is $n\times n$ identity matrix. By $L_i$-smoothness of $f_i$, we have
\begin{equation}\label{eq-3-5}
\begin{split}
\|\nabla f_i (x_i (t)) \|&\leq \|\nabla f_i (x_i (t)) - \nabla f_i (\bar{x}(t))\| + \|\nabla f_i (\bar{x}(t)) - \nabla f_i (x_*)\| + \|\nabla f_i (x_*)\|
\\
&\leq L_i \|x_i (t) - \bar{x}(t)\| + L_i \|\bar{x}(t) -x_*\| + D.
\end{split}
\end{equation}
We recall that $L=\max_{1\leq i \leq n} L_i$ and use the triangle inequality to bound $\|\nabla f(\mathbf{x}(t))\|$ as
\begin{equation}\label{eq-3-6}
\begin{split}
\|\nabla f (\mathbf{x}(t))\|&  = \left( \sum_{i=1}^m \|\nabla f_i (x_i (t))\|^2 \right)^{1/2} 
\\
&\leq  L\| \mathbf{x}(t)- \mathbf{\bar{x}}(t) \| + L \| \mathbf{\bar{x}}(t)-\mathbf{x}_*\| +\sqrt{m} D.
\end{split}
\end{equation} Inserting \eqref{eq-3-6} into \eqref{eq-3-4}, we get
\begin{equation*}
\|\mathbf{x}(t+1)-\mathbf{\bar{x}}(t+1)\| \leq  (\beta +L \alpha(t))\| \mathbf{x}(t)- \mathbf{\bar{x}}(t) \|+  L\alpha(t)  \| \mathbf{\bar{x}}(t)-\mathbf{x}_*\|   +\sqrt{m}D \alpha(t).
\end{equation*}
The proof is finished.
\end{proof}

\section{Uniform boundedness and the consensus estimate}\label{sec4}
In this section we establish the uniform boundedness of the sequences $\{\|\bar{\mathbf{x}}(t) - \mathbf{x}_*\|\}_{t \in \mathbb{N}_0}$ and $\{\|\mathbf{x}(t)- \bar{\mathbf{x}}(t)\|\}_{t \in \mathbb{N}_0}$ , and present the proof of Theorem \ref{cor-3-4}.   We first show that the sequence $\{x_i (t)\}_{t \in \mathbb{N}_0}$ are uniformly bounded under a condition on $\alpha (0) >0$. 
\begin{thm}\label{bound}
Suppose that  Assumptions \ref{LS} - \ref{ass-1-1}, \eqref{estimate1} and \eqref{estimate2} hold. If we assume a diminishing sequence $\{\alpha(t)\}_{t\in\mathbb{N}_0}$ satisfies $\alpha (0) < \frac{\eta (1-\beta)}{L ( \eta + L )}$ and set a finite value $R>0$ as
\begin{equation*}
R = \max\left\{ \|\bar{\mathbf{x}}(0) - \mathbf{x}_*\|,~\frac{L}{\eta}\|\mathbf{x}(0)- \bar{\mathbf{x}}(0)\|,~\frac{\sqrt{m} D\alpha(0)}{\eta (1-\beta)/L  - (\eta +L) \alpha (0)}\right\},
\end{equation*} 
then we have
\begin{equation}\label{eq-2-30}
\|\bar{\mathbf{x}}(t)-\mathbf{x}_* \|  \leq R \quad and \quad \|\mathbf{x}(t)-\bar{\mathbf{x}}(t) \| \leq \frac{\eta R}{L}<R, \quad \forall t \geq 0.
\end{equation}
\end{thm}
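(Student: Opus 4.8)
The plan is to establish both bounds of \eqref{eq-2-30} simultaneously by induction on $t$, using the coupled recursions \eqref{estimate1} and \eqref{estimate2} as the engine. Writing $A(t) = \|\bar{\mathbf{x}}(t) - \mathbf{x}_*\|$ and $B(t) = \|\mathbf{x}(t) - \bar{\mathbf{x}}(t)\|$, the claim is that $A(t) \le R$ and $B(t) \le \eta R/L$ for all $t \ge 0$; note $\eta R/L < R$ is automatic since $\eta = \mu L/(\mu+L) < L$. The base case $t=0$ is immediate from the definition of $R$: its first entry forces $A(0) \le R$ and its second entry forces $B(0) \le \eta R/L$.

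For the inductive step, assume $A(t) \le R$ and $B(t) \le \eta R/L$. Feeding these into \eqref{estimate1}, and observing that under the hypothesis $\alpha(0) < \eta(1-\beta)/(L(\eta+L))$ one has $\eta\alpha(t) \le \eta\alpha(0) < 1$ so that the coefficient $1-\eta\alpha(t)$ is nonnegative, I obtain
$$
A(t+1) \le (1-\eta\alpha(t)) R + L\alpha(t)\cdot\frac{\eta R}{L} = (1-\eta\alpha(t))R + \eta\alpha(t) R = R.
$$
Thus the first bound propagates exactly, the cancellation being engineered precisely by the factor $L$ in $B(t) \le \eta R/L$; no further condition on the stepsize is needed for this half.

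The substantive part is propagating the consensus bound. Substituting the induction hypotheses into \eqref{estimate2} and using that $\{\alpha(t)\}$ is non-increasing so $\alpha(t) \le \alpha(0)$, I would reduce the target inequality $B(t+1) \le \eta R/L$ to
$$
(\beta + L\alpha(t))\frac{\eta R}{L} + L\alpha(t) R + \sqrt{m}D\alpha(t) \le \frac{\eta R}{L},
$$
which, after cancelling $\beta\eta R/L$, rearranges to $\alpha(t)\big[(\eta+L)R + \sqrt{m}D\big] \le \eta R(1-\beta)/L$. Isolating the terms proportional to $R$, this is equivalent to $R \ge \sqrt{m}D\,\alpha(0)/\big(\eta(1-\beta)/L - (\eta+L)\alpha(0)\big)$, which is exactly the third entry in the defining maximum for $R$. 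The main (indeed, only) obstacle is ensuring this quantity is well defined: here the strict hypothesis $\alpha(0) < \eta(1-\beta)/(L(\eta+L))$ is indispensable, since it makes the denominator $\eta(1-\beta)/L - (\eta+L)\alpha(0)$ strictly positive, so that the third entry of $R$ is finite and the rearrangement preserves the direction of the inequality. With $R$ taken to be the maximum of the three listed quantities, both bounds close and the induction is complete.
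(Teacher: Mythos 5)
Your proof is correct and follows essentially the same route as the paper: a simultaneous induction propagating $\|\bar{\mathbf{x}}(t)-\mathbf{x}_*\|\le R$ and $\|\mathbf{x}(t)-\bar{\mathbf{x}}(t)\|\le \eta R/L$ via \eqref{estimate1} and \eqref{estimate2}, with the third entry in the maximum defining $R$ and the strict stepsize bound ensuring the consensus step closes. Your explicit check that $1-\eta\alpha(t)\ge 0$ is a small point left implicit in the paper, but otherwise the arguments coincide.
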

\begin{proof}
We argue by an induction. By the definition of $R$, we have 
\begin{equation*} 
\|\bar{\mathbf{x}}(0)-\mathbf{x}_* \|  \leq R \quad and \quad \|\mathbf{x}(0)-\bar{\mathbf{x}}(0) \| \leq \frac{\eta}{L}R.
\end{equation*}
Next we  assume that the following inequalities
\begin{equation}\label{eq-3-7}
\|\bar{\mathbf{x}}(t)-\mathbf{x}_* \|  \leq R \quad and \quad \|\mathbf{x}(t)-\bar{\mathbf{x}}(t) \| \leq \frac{\eta}{L}R
\end{equation}
holds true for some fixed $t \geq 0$. Then, plugging these bounds in \eqref{estimate1}, we get
\begin{align*}
\| \bar{\mathbf{x}}(t+1) - x_*\| \leq \Big(1-{\eta \alpha(t)}\Big)R + L\alpha(t)\Big(\frac{\eta}{L}\Big)R = R.
\end{align*}
Combining \eqref{eq-3-7} and \eqref{estimate2}, we have
\begin{equation*}
\|\mathbf{x}(t+1) - \bar{\mathbf{x}}(t+1)\| \leq (\beta + L\alpha(t)) \Big(\frac{\eta}{L}\Big)R + L\alpha(t)R+\sqrt{m} D\alpha(t).
\end{equation*}
Using the definition of $R$ and the fact that $\alpha (t)$ is non-increasing, we deduce 
\begin{equation*}
\begin{split}
\|\mathbf{x}(t+1) - \bar{\mathbf{x}}(t+1)\| &\leq \Big[(\beta + L\alpha(t)) \Big(\frac{\eta}{L}\Big) + L\alpha(t)  \Big] R + \sqrt{m} D \alpha (t)
\\
& = \left[ \frac{\beta \eta}{L} + ( {\eta} + L ) \alpha(t) \right]R + \sqrt{m}D \alpha (t)\\
&\leq  \left[ \frac{\beta \eta}{L} + ( {\eta} + L )   \alpha(0)\right]R + \sqrt{m}D \alpha (0).
\end{split}
\end{equation*}
Notice that the condition $\alpha (0) < \frac{\eta (1-\beta)}{L ( \eta + L )}$  implies $\frac{(1-\beta)\eta}{L} -(\eta +L)\alpha(0)>0$. Thus $R>0$ is well-defined and the following inequality follows:
\begin{equation*}
\begin{split}
\left[ \frac{\beta \eta}{L} + ( {\eta} + L ) (t) \alpha(0)\right]R + \sqrt{m}D \alpha (0) &\leq \frac{\eta}{L} R.
\end{split}
\end{equation*}
This completes the induction, and so the proof is done.
\end{proof}
\begin{rem}
We will show that the range of $\alpha (0)$ is almost sharp for large $L>0 $ and small $\mu >0$ in Subsection \ref{sec4-1}.
\end{rem}
By the uniform bound result of Theorem \ref{bound}, we now prove  Theorem \ref{cor-3-4}.
\begin{proof}[Proof of Theorem \ref{cor-3-4}]
Note that the inequality \eqref{3-12} holds true trivially for $t=0$. Hence we consider the case $t\geq1$.
By Lemma \ref{est2} and Theorem \ref{bound}, we have
\begin{equation}\label{eq-3-17}
 \| \mathbf{x}(t+1) - \bar{\mathbf{x}}(t+1)\| \leq \beta \|\mathbf{x}(t) - \bar{\mathbf{x}}(t)\| + d\alpha(t),
\end{equation}
where $d = 2LR +\sqrt{m}D$. Using this iteratively gives
\begin{equation}\label{eq-3-8}
 \|\mathbf{x}(t) - \bar{\mathbf{x}}(t) \| \leq \beta^t \|\mathbf{x}(0) - \bar{\mathbf{x}}(0)\| + \sum_{s=0}^{t-1} \beta^{t-1-s} d \alpha (s).
\end{equation}
From this we easily see that \eqref{3-12} holds true for $t=1$. For $t\geq 2$, we estimate
\begin{equation*}
\begin{split}
\sum_{s=0}^{t-1} \beta^{t-1-s}\alpha (s) & = \sum_{s=0}^{[t/2]-1} \beta^{t-1-s} \alpha (s) + \sum_{s=[t/2]}^{t-1} \beta^{t-1-s}\alpha (s) 
\\
&\leq \alpha (0) \sum_{s=0}^{[t/2]-1} \beta^{t-1-s} + \alpha ([t/2]) \sum_{s=0}^{\infty} \beta^{s}
\\
&\leq \alpha (0) \frac{\beta^{t/2}}{1-\beta} + \alpha ([t/2]) \frac{1}{1-\beta}.
\end{split}
\end{equation*}
Using these estimates in \eqref{eq-3-8}, it follows that
\begin{equation*}
 \|\mathbf{x}(t) - \bar{\mathbf{x}}(t)\| \leq \frac{d}{1-\beta} \alpha ([t/2])  + \beta^t \|\mathbf{x}(0) - \bar{\mathbf{x}}(0)\| + \frac{\beta^{t/2}d}{1-\beta} \alpha (0) .
\end{equation*}
The proof is done.
\end{proof}

\subsection{Sharpness of the range of Theorem \ref{bound}}\label{sec4-1}
The uniform boundedness result of Theorem \ref{bound} was also achieved in \cite{YLY} when the stepsize is constant  and each function is convex. Precisely, it was shown that the uniform boundedness is guaranteed when $\alpha (0) \leq 1/L$, which is less restricted than the condition $\alpha (0) < \frac{\eta (1-\beta)}{L (\eta + L)}$ given in Theorem \ref{bound}. On the other hand, the assumptions of Theorem \ref{bound} allow each function to be nonconvex as long as the total cost remains to be strongly convex. Also, the stepsize may be time-varying. 

To verify the sharpness of the range $\alpha(0) < \frac{\eta(1-\beta)}{L(\mu+L)}$ of the assumptions in Theorem \ref{bound}, we construct an example with the following function:
\begin{equation*}
f_1 (x) = a_1 x^2 \quad \textrm{and}\quad f_2 (x) = - a_2 x^2, \quad x \in \mathbb{R},
\end{equation*}
where $a_1$ and $a_2$ are positive values satisfying $a_1 -a_2 >0$. Then the total cost $f= 1/2(f_1 +f_2)$ is strongly convex even though the local cost $f_2$ is non-convex.  We take a value $\gamma \in (0,1/2]$ and set a doubly stochastic matrix $W$ by
\begin{equation*}
W = \begin{pmatrix} 1 - \gamma& \gamma \\ \gamma & 1-\gamma \end{pmatrix}.
\end{equation*}
Let $x_1 (t)$ and $x_2 (t)$ be the variables which are only known to agents $1$ and $2$, respectively. 

\begin{lem}\label{lem-b-1}
If $\alpha > \frac{\gamma  (a_1 -a_2)}{2 a_1 a_2}$, then the sequence $\{(x_1 (t), x_2 (t))\}_{t\geq 0}$ generated by \eqref{eq-1-1} with any initial data $(x_1(0), x_2(0)) \in (\mathbb{R}\setminus \{0\})^2$ diverges.
\end{lem}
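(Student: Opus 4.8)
The plan is to use the quadratic form of the two local costs to turn \eqref{eq-1-1} into a \emph{linear} recursion and then read off divergence from the spectrum. Since $\nabla f_1(x)=2a_1x$ and $\nabla f_2(x)=-2a_2x$, writing $X(t)=\big(x_1(t),x_2(t)\big)^T$ the scheme becomes $X(t+1)=MX(t)$ with
$$
M=\begin{pmatrix}1-\gamma-2\alpha a_1 & \gamma\\ \gamma & 1-\gamma+2\alpha a_2\end{pmatrix},
$$
so that $X(t)=M^tX(0)$. As $M$ is symmetric it has real eigenvalues $\lambda_-\le\lambda_+$ and an orthonormal eigenbasis $\{v_-,v_+\}$; the size of the orbit is governed entirely by where $\lambda_\pm$ sit relative to the unit circle.

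First I would pin down $\lambda_+$. A direct computation gives
$$
\det(M-I)=(-\gamma-2\alpha a_1)(-\gamma+2\alpha a_2)-\gamma^2=2\alpha\big(\gamma(a_1-a_2)-2\alpha a_1a_2\big),
$$
and since $\det(M-I)=(\lambda_+-1)(\lambda_--1)$, the hypothesis $\alpha>\frac{\gamma(a_1-a_2)}{2a_1a_2}$ is \emph{exactly} the condition $\det(M-I)<0$. This forces $\lambda_-<1<\lambda_+$, producing an expanding eigenvalue $\lambda_+>1$. This is the clean part of the argument and the place where the precise threshold enters.

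To conclude I would expand $X(0)=c_-v_-+c_+v_+$; orthonormality yields $\|X(t)\|^2=c_-^2\lambda_-^{2t}+c_+^2\lambda_+^{2t}$. If both eigenvalues lie outside $[-1,1]$, then $\|X(t)\|\to\infty$ for \emph{every} $X(0)\neq0$, in particular for every $X(0)\in(\R\setminus\{0\})^2$, and the lemma follows. Since $\lambda_+>1$ is already in hand, the remaining task is to show $\lambda_-<-1$, equivalently $\det(M+I)<0$; here
$$
\det(M+I)=4(1-\gamma)-2\alpha(2-\gamma)(a_1-a_2)-4\alpha^2a_1a_2.
$$

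The main obstacle is precisely this second eigenvalue. Unlike $\det(M-I)$, the sign of $\det(M+I)$ is not handed to us by the hypothesis, so controlling it is delicate: one must combine $\gamma\in(0,1/2]$, $a_1>a_2>0$ and $\alpha>\frac{\gamma(a_1-a_2)}{2a_1a_2}$ to push $\lambda_-$ below $-1$. This inequality is genuinely needed for the \emph{universal} statement, because the contracting eigenvector $v_-=\big(\gamma,\lambda_--(1-\gamma-2\alpha a_1)\big)$ has both entries nonzero (as $\gamma>0$, and $\lambda_-\neq 1-\gamma-2\alpha a_1$ since otherwise $\det(M-\lambda_- I)=-\gamma^2\neq0$), so the line $\R v_-$ does meet $(\R\setminus\{0\})^2$; were $|\lambda_-|<1$, the orbit from a point on this line would remain bounded. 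I would therefore analyze $\det(M+I)$ as a concave quadratic in $\alpha$, locate its positive root relative to the threshold $\frac{\gamma(a_1-a_2)}{2a_1a_2}$, and aim to show $\det(M+I)<0$ throughout the admissible range of parameters; securing this sign is the crux on which the lemma, as worded for every initial datum with nonzero coordinates, rests.
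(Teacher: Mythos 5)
Your reduction to the linear recursion $X(t+1)=MX(t)$ is exactly the paper's; in the first half the only difference is cosmetic. The paper computes the eigenvalues in closed form, $\lambda_\pm=(1-\gamma)+(a_2-a_1)\alpha\pm\sqrt{(a_1+a_2)^2\alpha^2+\gamma^2}$, and checks through a chain of equivalences that $\lambda_+>1$ precisely when $\alpha>\frac{\gamma(a_1-a_2)}{2a_1a_2}$, whereas your test $\det(M-I)=2\alpha\bigl(\gamma(a_1-a_2)-2\alpha a_1a_2\bigr)<0$, combined with the symmetry of $M$, reaches $\lambda_-<1<\lambda_+$ more cleanly. At that point the paper simply asserts that the sequence diverges because $M$ has an eigenvalue larger than one, and stops; it never discusses initial data aligned with the other eigenvector.

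The issue you isolate at the end is a genuine gap, but note that it is a gap in the lemma as stated (and, silently, in the paper's proof), not merely an unfinished step you could still close: the program ``show $\det(M+I)<0$ throughout the admissible range'' is impossible. Near the threshold, $\det(M+I)=4(1-\gamma)-2\alpha(2-\gamma)(a_1-a_2)-4\alpha^2a_1a_2$ is close to $4(1-\gamma)>0$. Concretely, with the paper's own simulation values $a_1=10$, $a_2=6$, $\gamma=0.2$ (threshold $\frac{\gamma(a_1-a_2)}{2a_1a_2}=\frac{1}{150}\approx 0.0067$) and $\alpha=0.007$, one finds $\lambda_+\approx 1.0012$ but $\lambda_-\approx 0.543\in(0,1)$, so $\det(M+I)>0$ and the contracting direction survives. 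Since, as you correctly argue, $v_-$ has both entries nonzero, every nonzero multiple of $v_-$ is an initial datum in $(\mathbb{R}\setminus\{0\})^2$ whose orbit converges to $0$ rather than diverging; the universal quantifier in the lemma therefore fails. What is true, and what the paper's computation actually establishes, is that $M$ has an eigenvalue exceeding $1$, hence the orbit diverges for every initial datum off the one-dimensional stable line $\mathbb{R}v_-$, i.e., for almost every (in particular, randomly chosen) initial datum --- which is all that the sharpness discussion in Corollary \ref{cor-4-2} and the experiment in Section \ref{sec-sim} require. Your write-up is thus more careful than the paper's proof on exactly this point; the right way to finish is to weaken the conclusion to generic initial data rather than to pursue $\lambda_-<-1$.
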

\begin{proof}
For this example, the decentralized gradient descent \eqref{eq-1-1} is written as 
\begin{equation*}
\begin{split}
x_1 (t+1)& = (1-\gamma ) x_1 (t) + \gamma  x_2 (t) - 2 \alpha  a_1 x_1 (t)
\\
x_2 (t+1)& =\gamma  x_1 (t) + (1-\gamma ) x_2 (t) + 2 \alpha a_2 x_2 (t).
\end{split}
\end{equation*}
This can be written in a vector form as follows
\begin{equation}\label{eq-4-25}
\begin{pmatrix} x_1 (t+1) \\ x_2 (t+1) \end{pmatrix} = M \begin{pmatrix} x_1 (t) \\ x_2 (t) \end{pmatrix},
\end{equation}
where
\begin{equation*}
M = \begin{pmatrix} 1- \gamma  - 2 \alpha a_1 & \gamma  \\ \gamma  & 1-\gamma  + 2\alpha a_2 \end{pmatrix}.
\end{equation*}
The sequence $(x_1 (t), x_2 (t))$ of \eqref{eq-4-25} diverges if $M$ has an eigenvalue larger than one. 
The eigenvalues of the matrix $M$ are given by $\lambda \in \mathbb{R}$ solving the following equation
\begin{equation*}
\lambda^2 - 2 (1- \gamma  + (a_2 -a_1) \alpha ) \lambda + (1-\gamma  -2 a_1 \alpha) (1-\gamma  + 2a_2 \alpha) - \gamma ^2 = 0.
\end{equation*}
The solutions are
\begin{equation*}
(1-\gamma ) + (a_2 -a_1) \alpha \pm \sqrt{(a_1 +a_2)^2 \alpha^2 + \gamma ^2}.
\end{equation*}
This formula allows us to show that the largest eigenvalue is larger than $1$ if $\alpha > \frac{\gamma  (a_1 -a_2)}{2a_1 a_2}$, which implies the sequence $\{(x_1(t),x_2(t)\}$ diverges. In fact, it is checked in the following way
\begin{equation*}
\begin{split}
&(1 - \gamma ) + (a_2 -a_1) \alpha + \sqrt{(a_1 +a_2)^2 \alpha^2 + \gamma ^2} > 1
\\
& \Leftrightarrow \sqrt{(a_1 + a_2)^2 \alpha^2 + \gamma^2} > \gamma  + (a_1 -a_2) \alpha
\\
&\Leftrightarrow (a_1 +a_2)^2 \alpha^2 + \gamma^2 >  \gamma^2 + 2  \gamma (a_1 -a_2) \alpha + (a_1 -a_2)^2 \alpha^2
\\
&\Leftrightarrow 4a_1 a_2 \alpha^2 > 2  \gamma (a_1 -a_2) \alpha
\\
& \Leftrightarrow \alpha > \frac{ \gamma (a_1 -a_2)}{2a_1 a_2}.
\end{split}
\end{equation*}
The proof is done.
\end{proof}

Now we show that the range $\alpha(0)<\frac{\eta(1-\beta)}{L(\mu +L)}$ is almost sharp for large $L$ and small $\mu$ for the example considered in Lemma \ref{lem-b-1}.
\begin{cor}\label{cor-4-2}
If $\alpha (0) >\frac{\mu(1-\beta)}{L (L-2 \mu)}$, then the sequences $\{x_i (t)\}_{t \in \mathbb{N}_0}$ may diverge for $i=1,2,\cdots,m$. This implies that  the condition $\alpha(0)<\frac{\eta(1-\beta)}{L(\mu +L)}$  is sharp in the sense that
\begin{equation*}
\lim_{L \rightarrow \infty} \frac{\eta (1-\beta)}{L (\eta + L)}\Big/ \frac{\mu(1-\beta)}{L (L-2 \mu)} = 1\quad \textrm{and}\quad \lim_{\mu \rightarrow 0} \frac{\eta (1-\beta)}{L (\eta + L)}\Big/ \frac{\mu(1-\beta)}{L (L-2 \mu)} = 1. 
\end{equation*}
\end{cor}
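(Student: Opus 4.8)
The plan is to specialize the general constants $\mu$, $L$, $\eta$ and $\beta$ to the two-agent example of Lemma~\ref{lem-b-1}, show that the divergence threshold $\frac{\gamma(a_1-a_2)}{2a_1a_2}$ appearing there coincides with $\frac{\mu(1-\beta)}{L(L-2\mu)}$, and then reduce the two sharpness limits to the computation of a single elementary ratio. The first assertion of the corollary is then immediate from Lemma~\ref{lem-b-1}, and the second reduces to a one-variable limit.

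First I would compute the relevant constants for the cost $f=\tfrac12(f_1+f_2)$ with $f_1(x)=a_1x^2$, $f_2(x)=-a_2x^2$ and $a_1>a_2>0$. Since $f(x)=\tfrac12(a_1-a_2)x^2$, the strong convexity modulus is $\mu=a_1-a_2$. The second derivatives of $f_1$ and $f_2$ are $2a_1$ and $-2a_2$, so $L_1=2a_1$, $L_2=2a_2$ and $L=\max\{2a_1,2a_2\}=2a_1$; in particular $L-2\mu=2a_2>0$, so the threshold is well defined and positive. For the mixing matrix a short computation gives
\begin{equation*}
W-\tfrac1m\mathbf{1}\mathbf{1}^T=\Big(\tfrac12-\gamma\Big)\begin{pmatrix}1&-1\\-1&1\end{pmatrix},
\end{equation*}
whose nonzero eigenvalue is $1-2\gamma$; since $\gamma\in(0,\tfrac12]$ this yields $\beta=1-2\gamma$, i.e. $\gamma=\tfrac{1-\beta}{2}$.

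Next I would substitute these identities into the divergence threshold of Lemma~\ref{lem-b-1}. Using $a_1=L/2$, $a_2=(L-2\mu)/2$ and $\gamma=(1-\beta)/2$, one finds $2a_1a_2=\tfrac12 L(L-2\mu)$ and $\gamma(a_1-a_2)=\tfrac12\mu(1-\beta)$, so that
\begin{equation*}
\frac{\gamma(a_1-a_2)}{2a_1a_2}=\frac{\mu(1-\beta)}{L(L-2\mu)}.
\end{equation*}
Hence Lemma~\ref{lem-b-1} says precisely that for $\alpha(0)=\alpha>\frac{\mu(1-\beta)}{L(L-2\mu)}$ the iterates $x_1(t),x_2(t)$ diverge for every nonzero initial datum, which is the first claim. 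For the sharpness statement I would form the ratio of the two thresholds; the factors $1-\beta$ and $L$ cancel, leaving $\frac{\eta(L-2\mu)}{\mu(\eta+L)}$, and inserting $\eta=\frac{\mu L}{\mu+L}$ (so that $\eta+L=\frac{L(L+2\mu)}{\mu+L}$) collapses this to $\frac{L-2\mu}{L+2\mu}$. Both limits then follow since $\frac{L-2\mu}{L+2\mu}\to1$ as $L\to\infty$ and as $\mu\to0$.

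The computations are entirely elementary, so there is no serious analytic obstacle; the only points requiring care are the correct identification of the constants — in particular that $L=2a_1$ (which relies on $a_1>a_2$) and that the example is nondegenerate because $L-2\mu=2a_2>0$ — together with keeping $\gamma\le\tfrac12$ so that $\beta=1-2\gamma$ lies in $[0,1)$. Once these identifications are pinned down, the algebraic collapse of the ratio to $\frac{L-2\mu}{L+2\mu}$ is the whole content of the argument.
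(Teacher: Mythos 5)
Your proposal is correct and follows essentially the same route as the paper: specialize to the two-agent example of Lemma \ref{lem-b-1} with $a_1 = L/2$, $a_2 = (L-2\mu)/2$, identify $\beta = 1-2\gamma$, verify that the divergence threshold $\frac{\gamma(a_1-a_2)}{2a_1a_2}$ equals $\frac{\mu(1-\beta)}{L(L-2\mu)}$, and compare it with the bound of Theorem \ref{bound}. Your explicit simplification of the ratio to $\frac{L-2\mu}{L+2\mu}$ is a slightly cleaner way to read off both limits than the paper's direct limit computation, but it is the same argument.
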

\begin{proof}
In the setting of Lemma \ref{lem-b-1}, we let $a_1 = \frac{L}{2}$ and $a_2 = \frac{L}{2}- \mu$ with a value $\mu >0$ and a large number $L>0$. Then $f_1$ and $f_2$ are $L$-smooth functions and $f(x) = (\mu/2)x^2$ is $\mu$-strongly convex. Also we have $\beta = 1- 2\gamma$ in Lemma \ref{lem-1-1}. Then the condition on $\alpha >0$ of Lemma \ref{lem-b-1} is written as 
\begin{equation}\label{eq-b-1}
\alpha > \frac{2\mu \gamma}{L (L - 2\mu)}.
\end{equation}
On the other hand, the condition of Theorem \ref{bound} is written as
\begin{equation*}
\left( \frac{\mu L}{\mu +L} + L  \right) \alpha < \frac{\mu L}{\mu +L} \cdot \frac{2\gamma}{L},
\end{equation*}
which is equivalent to  
\begin{equation}\label{eq-b-2}
\alpha < \frac{2 \mu \gamma}{\Big( \frac{\mu L}{\mu +L} + L   \Big) (\mu +L)}.
\end{equation}
Thus the condition \eqref{eq-b-1} is sharp in the sense that the right hand sides of \eqref{eq-b-1} and \eqref{eq-b-2} are very close when $L$ is sufficiently large, which also can be seen by the limit
\begin{equation*}
\lim_{L \rightarrow \infty}  \frac{2\mu \gamma}{L (L-2\mu)}\Big/\frac{2 \mu \gamma}{\Big( \frac{\mu L}{\mu +L} + L \Big) (\mu +L)}=1.
\end{equation*}
Similarly, the range is sharp for sufficiently small $\mu$, in view of the following limit
\begin{equation*}
\lim_{\mu \rightarrow 0}  \frac{2\mu \gamma}{L (L-2\mu)}\Big/\frac{2 \mu \gamma}{\Big( \frac{\mu L}{\mu +L} + L \Big) (\mu +L)}=1.
\end{equation*}
The proof is done.
\end{proof}

\section{Convergence Analysis}\label{sec5}
In this section, we give the proofs of  Theorems \ref{main} and \ref{thm-2-4}. In Section \ref{sec4}, we obtained the uniform boundedness and a sharp estimate on the consensus error $\|\mathbf{x}(t) - \bar{\mathbf{x}}(t)\|$. Here we will obtain a sharp estimate  of $\|\bar{\mathbf{x}}(t) - \mathbf{x}_*\|$ using Theorem  \ref{cor-3-4} and Theorem \ref{bound} together with Lemma  \ref{est1} and the following proposition.
\begin{prop}\label{convergencerate}
 Let $ p \in (0,1]$ and $q>0$. Take $C_1 >0$ and $w \geq 1$ such that $C_1/w^p <1$. 
Suppose that the sequence $\{A(t)\}_{t\in\mathbb{N}}$ satisfies
\begin{equation}\label{4-4}
A(t) \leq \bigg(1-\frac{C_1}{(t+w-1)^p}\bigg) A(t-1) + \frac{C_2}{(t+w-1)^{p+q}} \quad \text{for all $t\geq1$}.
\end{equation}
Set $Q = \Big( \frac{w+1}{w}\Big)^{p+q}$. Then $A(t)$ satisfies the following bound.

\medskip 

\noindent Case 1. If $p<1$, then we have
\begin{equation*}
 A(t) \leq   \delta \cdot([t/2]+w-1)^{-q} + \mathcal{R}(t),
\end{equation*}
where  $\delta =  \frac{QC_2}{C_1} e^{\frac{C_1}{w^p}}$ and
\begin{equation*}
\mathcal{R}(t) =e^{-\sum^{t-1}_{s=0}\frac{C_1}{(s+w)^p}}A(0)  + QC_2e^{- \frac{C_1t}{2(t+w)^p}}  \sum_{s=1}^{[t/2]-1} \frac{1}{(s+w)^{p+q}}.
\end{equation*}
Here the second term in the right hand side is assumed to be zero for $1\leq t \leq 3$.
\medskip

\noindent Case 2. If $p=1$, then we have
\begin{equation*}
 A(t) \leq \Big( \frac{w}{t+w}\Big)^{C_1} A(0) + \mathcal{R} (t),
\end{equation*}
where
\begin{equation*}
  \mathcal{R}(t)   = \left\{\begin{array}{ll}  \frac{w^{C_1 -q}}{q-C_1}\cdot\frac{QC_2}{(t+w)^{C_1}}& \textrm{if}~q>C_1
\\
\log \left(\frac{t+w}{w}\right)\cdot\frac{QC_2}{(t+w)^{C_1}} &\textrm{if}~ q=C_1
 \\
\frac{1}{C_1-q}\cdot\left( \frac{w+1}{w}\right)^{C_1}\cdot \frac{QC_2}{(t+w+1)^q}& \textrm{if}~ q<C_1.
\end{array}\right.
\end{equation*}
 
\end{prop}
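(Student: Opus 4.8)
The plan is to unroll the one–step recursion \eqref{4-4} into an explicit product-plus-sum representation and then estimate the two pieces separately. Write $\theta_s = 1 - \frac{C_1}{(s+w-1)^p}$; since $s\geq 1$ gives $(s+w-1)^p\geq w^p > C_1$, each $\theta_s$ lies in $(0,1)$, and in particular $\theta_s\geq 0$, which is what allows us to iterate the inequality while preserving its direction. Iterating \eqref{4-4} from $0$ to $t$ yields
\[
A(t) \leq \Big(\prod_{s=1}^{t}\theta_s\Big) A(0) + \sum_{s=1}^{t}\Big(\prod_{k=s+1}^{t}\theta_k\Big)\frac{C_2}{(s+w-1)^{p+q}},
\]
with the convention that the empty product (the term $s=t$) equals $1$. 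The homogeneous coefficient is controlled by $1-x\leq e^{-x}$, giving $\prod_{s=1}^{t}\theta_s \leq \exp\big(-C_1\sum_{s=0}^{t-1}(s+w)^{-p}\big)$ after reindexing; this is precisely the $A(0)$–term in $\mathcal{R}(t)$ for Case 1, and, using $\sum_{s=1}^t(s+w-1)^{-1}\geq\log\frac{t+w}{w}$, it gives the factor $(w/(t+w))^{C_1}$ in Case 2. All the remaining difficulty lies in the forcing sum.

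For Case 1 ($p<1$) I would split the forcing sum at $s=[t/2]$. For the tail $s\geq[t/2]$ the decisive observation is the telescoping identity
\[
\Big(\prod_{k=s+1}^{t}\theta_k\Big)\frac{C_1}{(s+w-1)^p} = \prod_{k=s+1}^{t}\theta_k - \prod_{k=s}^{t}\theta_k,
\]
which holds because $\frac{C_1}{(s+w-1)^p}=1-\theta_s$. Pulling the slowly varying factor $(s+w-1)^{-q}\leq([t/2]+w-1)^{-q}$ out of the tail and collapsing the telescope leaves a quantity bounded by $1$, producing the leading term $\delta\,([t/2]+w-1)^{-q}$; converting $(s+w-1)$–denominators into $(s+w)$–denominators and absorbing the boundary index shift is what accounts for the constants $Q$ and $e^{C_1/w^p}$ in $\delta$. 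For the head $s<[t/2]$ the product $\prod_{k=s+1}^t\theta_k$ contains all factors with index in $[[t/2],t]$, at least $t/2$ of them, each at most $e^{-C_1(t+w)^{-p}}$; hence $\prod_{k=s+1}^t\theta_k\leq e^{-C_1 t/(2(t+w)^p)}$, and the same denominator conversion turns the head into the $QC_2\,e^{-C_1 t/(2(t+w)^p)}\sum_{s=1}^{[t/2]-1}(s+w)^{-(p+q)}$ term of $\mathcal{R}(t)$.

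For Case 2 ($p=1$) the splitting is unnecessary, since the homogeneous product now decays only polynomially. From $1-x\leq e^{-x}$ together with $\sum_{k=s+1}^t(k+w-1)^{-1}\geq\log\frac{t+w}{s+w}$ one gets $\prod_{k=s+1}^t\theta_k\leq(\tfrac{s+w}{t+w})^{C_1}$, so the forcing sum is bounded by $\frac{C_2}{(t+w)^{C_1}}\sum_{s=1}^{t}(s+w)^{C_1}(s+w-1)^{-(1+q)}$, i.e. essentially $\frac{QC_2}{(t+w)^{C_1}}\sum_{s=1}^t(s+w)^{C_1-1-q}$ after the now-familiar denominator conversion. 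The exponent $C_1-1-q$ then dictates the three regimes: for $q>C_1$ the series converges and an integral comparison gives the constant $\frac{w^{C_1-q}}{q-C_1}$; for $q=C_1$ it is logarithmic, producing the $\log\frac{t+w}{w}$ factor; for $q<C_1$ it diverges like $(t+w+1)^{C_1-q}/(C_1-q)$, and after cancelling $(t+w)^{-C_1}$ and bounding $(\frac{t+w+1}{t+w})^{C_1}\leq(\frac{w+1}{w})^{C_1}$ this leaves the $(t+w+1)^{-q}$ decay with its stated prefactor. These are exactly the three branches of $\mathcal{R}(t)$.

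The main obstacle is twofold. First, recognizing and exploiting the telescoping identity in Case 1 is the one genuinely clever step; without it, the product-weighted tail sum of $(s+w-1)^{-(p+q)}$ is awkward to evaluate sharply when $p<1$, precisely because no clean power-law homogeneous bound exists in that range. Second, everything else is delicate but routine index bookkeeping: lower–bounding the partial sums $\sum(k+w-1)^{-p}$ to pin down the exponent $C_1 t/(2(t+w)^p)$, converting $(s+w-1)$ into $(s+w)$ (the source of every factor $Q$), tracking the floor $[t/2]$ and the empty–product convention, and carrying out the three integral comparisons of Case 2 with the correct boundary terms. Keeping the directions of all these inequalities consistent, so that they assemble into a single valid upper bound, is where errors are most likely to creep in.
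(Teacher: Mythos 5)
Your proposal is correct, and it reaches the stated bounds by a route that differs from the paper's at exactly one point --- the tail of the forcing sum in Case 1. The overall skeleton is identical to the paper's: unroll the recursion into a product-plus-sum form, control the homogeneous term via $1-x\le e^{-x}$, split the forcing sum at $s=[t/2]$ when $p<1$, and in Case 2 reduce to $\frac{QC_2}{(t+w)^{C_1}}\sum_{s=1}^t(s+w)^{C_1-1-q}$ and run three integral comparisons. Where the paper handles the tail $s\ge[t/2]$ by passing to an integral (its Lemma \ref{lem-4-2}) and then integrating by parts against the identity $\big(\tfrac{1}{C_1}e^{\frac{C_1}{1-p}(s+w)^{1-p}}\big)'=e^{\frac{C_1}{1-p}(s+w)^{1-p}}(s+w)^{-p}$, you instead use the discrete telescoping identity $\big(\prod_{k=s+1}^{t}\theta_k\big)(1-\theta_s)=\prod_{k=s+1}^{t}\theta_k-\prod_{k=s}^{t}\theta_k$ with $\theta_s=1-\frac{C_1}{(s+w-1)^p}$; since these differences are nonnegative (here the hypothesis $C_1/w^p<1$ enters), pulling out $(s+w-1)^{-q}\le([t/2]+w-1)^{-q}$ and collapsing the telescope bounds the tail by $\frac{C_2}{C_1}([t/2]+w-1)^{-q}$. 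This is the exact discrete counterpart of the paper's integration by parts, but it is more elementary --- no passage to the continuum, no boundary-term bookkeeping --- and it yields a sharper leading constant: $\frac{C_2}{C_1}$ in place of $\delta=\frac{QC_2}{C_1}e^{C_1/w^p}$, so the factors $Q$ and $e^{C_1/w^p}$ in the stated $\delta$, which in the paper arise from endpoint shifts in the integral comparison, are pure slack in your argument and the claimed inequality follows a fortiori. The remaining ingredients of your outline --- the head bound via at least $t/2$ factors each at most $e^{-C_1/(t+w)^p}$, the denominator conversion $(s+w-1)^{-(p+q)}\le Q(s+w)^{-(p+q)}$, the homogeneous bounds, and the three regimes of Case 2 including $\big(\tfrac{t+w+1}{t+w}\big)^{C_1}\le\big(\tfrac{w+1}{w}\big)^{C_1}$ --- coincide with the paper's computations, so no gap remains.
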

The proof of Proposition \ref{convergencerate} is given in Appendix \ref{app-a}. 
Now we are ready to prove the remaining of our main results.
\begin{proof}[Proof of Theorems \ref{main} and \ref{thm-2-4}] 
By \eqref{3-12}, we have
\begin{equation}\label{eq-3-10}
\|\mathbf{x}(t) -\bar{\mathbf{x}}(t)\|  \leq \frac{d}{1-\beta} \alpha ([t/2]) + V(t),
\end{equation}
where
\begin{equation}\label{eq-3-50}
\begin{split}
 V(t) &= \beta^t \|\mathbf{x}(0) - \bar{\mathbf{x}}(0)\| + \frac{\beta^{t/2}}{1-\beta} (d \alpha (0)  ).
\end{split}
\end{equation} 
 Inserting this into \eqref{estimate1} we get
\begin{equation}\label{eq-3-11}
 \begin{split}
&\|\bar{\mathbf{x}}(t+1) -\mathbf{x}_*\|\leq \Big(1- {\eta\alpha (t)}\Big) \|\bar{\mathbf{x}}(t) -\mathbf{x}_*\| + L\alpha (t)\cdot\frac{d}{1-\beta}\alpha ([t/2])  + L \alpha (t)V(t).
\end{split}
\end{equation}
Notice that for $w \geq 1$,
\begin{equation*}
 [t/2]+w \geq \frac{t-1}{2}  + w \geq \frac{t+w}{2}
\end{equation*}
for all $t \geq 0$. This implies that 
\begin{equation*}
\alpha([t/2]) =\frac{a}{([t/2] + w)^p} \leq \frac{2^pa}{(t+w)^p}.
\end{equation*}
This, together with \eqref{eq-3-11}, yields the following estimate
\begin{equation*}
 \begin{split}
&\|\bar{\mathbf{x}}(t+1) -\mathbf{x}_*\|\leq \Big( 1- \frac{C}{(t+w)^p}\Big) \|\bar{\mathbf{x}}(t) -\mathbf{x}_*\|  + \frac{C'}{(t+w)^{2p}}  + \frac{aL  V(t)}{(t+w)^p},
\end{split}
\end{equation*} 
where
\begin{equation*}
 C = {\eta a}, \ C' = \frac{Ld 2^p a^2}{1-\beta}.
\end{equation*} 
In order to estimate $\|\bar{\mathbf{x}}(t) - \mathbf{x}_*\|$ from this sequential inequality, we consider two sequences $\{G(t)\}_{t\geq 0}$,  and $\{K(t)\}_{t \geq 0}$ such that
\begin{equation*}
\begin{split}
G(t+1) &= \Big( 1- \frac{C}{(t+w)^p}\Big) G(t) + \frac{C'}{(t+w)^{2p}},\quad \textrm{with}\quad G(0) = \|\bar{\mathbf{x}}(0) - \mathbf{x}_*\|, 
\\
K(t+1)& = \Big( 1- \frac{C}{(t+w)^p}\Big) K(t) + \frac{aLV(t)}{(t+w)^p},\quad \textrm{with}\quad K(0)=0.
\end{split}
\end{equation*}
Then we have the following inequality
\begin{equation}\label{eq-4-21}
\|\bar{\mathbf{x}}(t) -\mathbf{x}_*\| \leq G(t) +   K(t), \ \text{for $t \geq 0$}.
\end{equation}
We first consider the case $p<1$. Using Proposition \ref{convergencerate} we estimate $G(t)$  as
\begin{equation*}
G(t) \leq \delta_1 \cdot\Big( [t/2] + w-1\Big)^{-p} + \mathcal{R}_1(t),
\end{equation*} 
with constants
\begin{equation*}
 \delta_1 = \Big( \frac{w+1}{w}\Big)^{2p} \frac{C'}{C} e^{\frac{C}{w^p}}  =Q_1 \frac{C'}{C} e^{\frac{C}{w^p}}
\end{equation*}
and
\begin{equation*}
\begin{split}
\mathcal{R}_1(t) & = e^{-\sum_{s=0}^{t-1} \frac{C}{(s+w)^p}} G(0) + Q_1C' e^{-\frac{\eta a}{2}\cdot \frac{t}{(t+w)^p}} \sum_{s=1}^{[t/2]-1} \frac{1}{(w+s)^{2p}}\\
&=:Y_1(t) + Y_2(t).
\end{split}
\end{equation*}
Here $Q_1 = \Big( \frac{w+1}{w}\Big)^{2p}$. Next, notice from \eqref{eq-3-50} that
\begin{equation*}
V(t) \leq \frac{\beta^{t/2}}{1-\beta} R_0,
\end{equation*}
where $R_0 = \|\mathbf{x}(0) - \overline{\mathbf{x}}(0)\| + \frac{d}{1-\beta} \alpha (0) $.  Combining this with Lemma \ref{lem-a-2} we find that
\begin{equation*}
K(t) \leq \frac{aLR_0}{(1-\beta)w^p} \bigg( \frac{e^{ -\frac{C}{2 } \frac{(t-1)}{(t+w)^p}}}{1-\sqrt{\beta}} + \frac{\sqrt{\beta}^{[(t-1)/2]}}{1-\sqrt{\beta}}\bigg)=:Y_3(t)
\end{equation*}
Putting these estimates in \eqref{eq-4-21} and observe that
\begin{equation}
\frac{C}{w^p} = \frac{\mu L a}{(\mu +L) w^p} \leq \frac{2\mu L}{(\mu +L)^2} \leq \frac{1}{2}.
\end{equation}
Then we get
\begin{equation*}
\begin{split}
\|\bar{\mathbf{x}}(t) -\mathbf{x}_*\| &\leq \delta \cdot \Big( [t/2] + w-1\Big)^{-p}  +  Y_1(t) + Y_2(t) +Y_3(t)
\\
&\leq \frac{(Q_0 \sqrt{e})a}{1-\beta} \cdot  \Big( [t/2] + w-1\Big)^{-p} +   Y_1(t) + Y_2(t) +Y_3(t),
\end{split}
\end{equation*}
which is the desired estimate.

Next we consider the case $p=1$. In this case, we assume that $a>\frac{2}{\eta}$ which implies $C=\eta a > 2$. Applying Proposition \ref{convergencerate} again to $G(t)$ we deduce
\begin{equation*}
\begin{split}
G(t)& \leq \Big( \frac{w}{t+w}\Big)^{C}\|\mathbf{x}(0) - \mathbf{x}_*\| + \left(\frac{w+1}{w}\right)^C\cdot\frac{Q_1C'}{(C-1)(t+w+1)}
\\
&\leq\Big( \frac{w}{t+w}\Big)^{C}\|\mathbf{x}(0) - \mathbf{x}_*\| + \left(\frac{w+1}{w}\right)^C\frac{2Q_1C'}{C(t+w+1)}.
\end{split}
\end{equation*} 
Here we used the inequality $\frac{1}{C-1} \leq \frac{2}{C}$ since $C>2$. Using $1+x \leq e^x$ and the condition $\frac{a}{w} \leq \frac{2}{\mu + L}$, we have 
\begin{equation*}
\begin{split}
\left(1+\frac{1}{w}\right)^C \leq e^{\frac{\eta a}{w}}\leq  e^{\frac{2\mu L}{(\mu + L)^2}} \leq \sqrt{e}.
\end{split}
\end{equation*}
 We also use Lemma \ref{lem-a-2} to find that 
\begin{equation*}
K(t) \leq \frac{aLR_0}{1-\beta}\left[ \frac{w^l}{1-e^{l/w}\sqrt{\beta}}\cdot \frac{1}{(t-1+w)^{l+1}} + \frac{\sqrt{\beta}^t}{(t-1+w)}\right]=:Y_4(t),
\end{equation*}
where $l$ is any value such that $0 \leq l \leq a-1$ and $e^{(l/w)}\sqrt{\beta} <1$.
Combining these estimates with \eqref{eq-4-21}, we have
\begin{equation*}
\begin{split}
&\|\bar{\mathbf{x}}(t) -\mathbf{x}_*\|  
\\
&\leq \Big( \frac{w}{t+w}\Big)^{C}\|\mathbf{x}(0) - \mathbf{x}_*\| + \frac{2\sqrt{e}Q_1C'}{C(t+w+1)} +Y_4(t)
\\
&= \Big( \frac{w}{t+w}\Big)^{\eta a}\|\mathbf{x}(0) - \mathbf{x}_*\|  + \frac{2\sqrt{e}Q_0}{(1-\beta)}\cdot\frac{a}{(t+w+1)}  +Y_4(t),
\end{split}
\end{equation*}
where $Q_0 = \frac{Q_1Ld2^p}{\eta}$.
This  gives the desired bound. The proof is done.
\end{proof}

\section{Simulation}\label{sec-sim}
In this section, we provide a numerical experiment for the algorithm \eqref{eq-1-1} with decreasing stepsize. We let $m$ be the number of agents and for each $1 \leq i \leq m$, we take $A_i \in \mathbb{R}^{n\times d}$ whose element is chosen randomly following the uniform distribution on $[0,1]$. Next we take a value $x_* \in \mathbb{R}^d$ and define $y_i = A_i x_* + \ep \in \mathbb{R}^n$ where each element of  $\ep \in \mathbb{R}^n$ is chosen from the normal distribution $N(0,0.1)$. The cost is defined as
\begin{equation*}
f(x)  = \frac{1}{m} \sum_{k=1}^m \|A_k x- y_k\|^2.
\end{equation*}
We take a value $Z>0$ and choose $ w=Z^{1/p}$. Next we consider the following values of $a$:
\begin{equation*}
a_1= \frac{w^p}{5(\mu +L)},\quad a_2 = \frac{w^p}{50(\mu +L)},\quad a_3 = \frac{\eta(1-\beta) w^p}{1.1 L(\eta +L)},\quad a_4 = \frac{\eta (1-\beta) w^p}{2L(\eta + L)}.
\end{equation*}
Then $\alpha (0) =\frac{a }{w^p}$ are computed as
\begin{equation*}
 \frac{1}{5(\mu +L)},\quad  \frac{1}{50(\mu +L)},\quad  \frac{\eta (1-\beta)}{1.1L(\eta +L)},\quad \frac{\eta(1-\beta)}{2L(\eta + L)}.
\end{equation*}
We take 
$Z = \frac{(16L(\eta +L))}{(\mu\eta  (1 - \beta))}$. Then, for $a_4$ we have $a (0) = \frac{8}{\mu} > \frac{2}{\mu}$ satisfying the assumption of Theorem \ref{thm-2-4}.
We test the algorithm \eqref{eq-1-1} with $\alpha (t) = \frac{a}{(t+w)^p}$ with above choices of $a$ and $w$ for $p \in \{0.25, 0.5, 0.75, 1\}$. We measure the error $( \sum_{k=1}^m \|x_k (t) -x_*\|^2)^{1/2}$ and the result is presented in Figure \ref{fig1}.
\begin{figure}[h]
	\centering
	\subfloat{{\includegraphics[width=7.5cm]{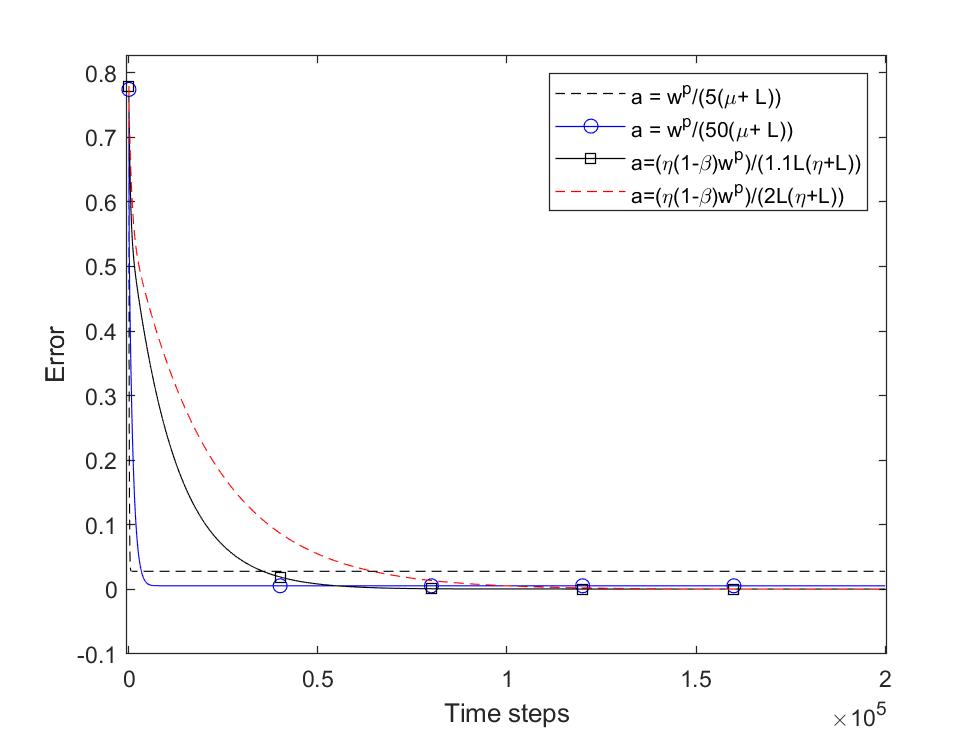} }}%
	\subfloat{{\includegraphics[width=7.5cm]{0.25.jpg} }}%
	\\
	\subfloat{{\includegraphics[width=7.5cm]{0.25.jpg} }}%
	\subfloat{{\includegraphics[width=7.5cm]{0.25.jpg} }}%
	\caption{ (Top-Left) $p=0.25$ (Top-Right) $p=0.5$  (Bottom-left) $p=0.75$ (Bottom-Right) $p=1$}\label{fig1} 
\end{figure}
In the experiment, the constants are computed as follows
\begin{itemize}
\item $L = 17.1256$, $\mu = 0.6961$, $\eta = 0.6689$.
\item $Z = 2.0205 \cdot 10^5 \ (=w^p)$.
\item $a_1 = 2.2675\cdot 10^3$, $a_2 = 226.75$, $a_3 = 20.89$, $a_4 = 11.49$.
\item $\beta = 0.9482$. 
\end{itemize}
As we expected in Theorems \ref{main} and \ref{thm-2-4}, we get fast convergence for large iterations but slow decay in the early stage if the value $a/w^p$ is small and vice versa for large $a/w^p$.

Next we provide a simulation result supporting the result of Lemma \ref{lem-b-1}. We define the cost function $f(x)$  and doubly stochastic $W$ as
$$
f(x)=\frac{f_1(x)+f_2(x)}{2} = \frac{a_1x^2 - a_2x^2}{2}
$$
and 
\begin{equation*}
W = \begin{pmatrix} 1 - \gamma& \gamma \\ \gamma & 1-\gamma \end{pmatrix}.
\end{equation*}
We note that $x=0$ is the optimal value. In this simulation, we take $a_1 = 10$, $a_2 = 6$ and $\gamma = 0.2$ and consider the following values of $k$,
$$
k_1 = 2.1, \ k_2 =2.01, \ k_3 = 2, \ k_4 = 1.99, \ k_5 = 1.9.
$$
We test the algorithm \eqref{eq-1-1} with $\alpha = \frac{\gamma(a_1 + a_2)}{ka_1a_2}$  with above choices of $k$. The initial value $(x_1 (0), x_2 (0))$ is chosen randomly from $(0,50) \times (0,50)$. We measure the quantity $\left(x^2_1(t) + x^2_2(t)\right)^{1/2}$ and the result is presented in Figure \ref{fig2}.

\begin{figure}[h]
	\centering
	\includegraphics[width=7.5cm]{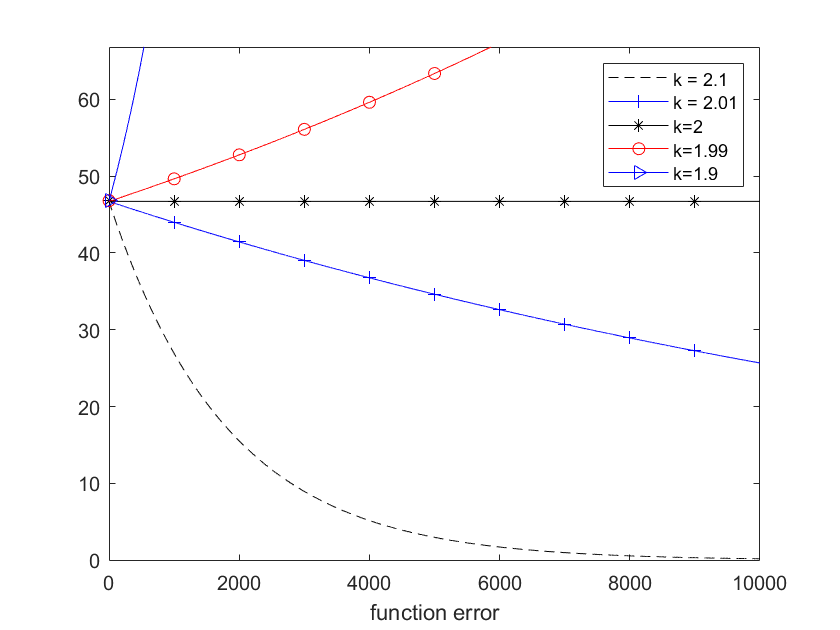} 
	\caption{}\label{fig2}
\end{figure}
As we expected in Lemma \ref{lem-b-1} and Corollary \ref{cor-4-2}, Figure \ref{fig2} shows that the quantities diverge when $\alpha$ is larger than the threshold, $\frac{\gamma(a_1+a_2)}{2a_1a_2}$ and converge when $\alpha$ is smaller than the threshold.

\section{Conclusion}
In this work, we establish the convergence property of the decentralized gradient descent for decreasing stepsize. Different to previous works where each cost function is assumed to be convex, the results of this paper allow each cost function be nonconvex as long as the total cost is assumed stronlgy convex. In addition, we show that the range of the stepsize used in proving the uniform bound is almost sharp. The numerical experiments are provided supporting the results of the paper.

\appendix
\section{Proof of Proposition \ref{convergencerate}}\label{app-a}
This section is devoted to give the proof of  Proposition \ref{convergencerate} and establish Lemma \ref{lem-a-2} which are used in Section \ref{sec5} for obtaining the convergence estimates from the sequential inequalities.
\begin{lem}\label{lem-4-2}\mbox{}
\begin{enumerate}
\item Assume that a continuous function $f: [0,\infty)\rightarrow \mathbb{R}$ is non-increasing on $[0,\infty)$. Then for any integers $b \geq a \geq 0$, we have
\begin{equation*}
\sum_{s=a}^b f(s) \geq \int_a^{b+1} f(v) dv.
\end{equation*}
\item Assume that a continuous function $f:[-1,\infty) \rightarrow \mathbb{R}$ is decreasing on $[0,c)$ and increasing on $[c,\infty)$. Then for any integers $b\geq a \geq 0$, we have
\begin{equation*}
\sum_{s=a}^b f(s) \leq \int_{a-1}^{b+1} f(v) dv.
\end{equation*}
\end{enumerate}
\end{lem}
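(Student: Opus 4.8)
The plan is to prove both statements by the elementary device of bounding the point value $f(s)$ by the integral of $f$ over an adjacent unit interval, choosing the side on which monotonicity forces $f(s)$ to be the extreme value of $f$, and then summing these one-interval estimates using additivity of the integral. No nontrivial machinery is needed; the only care required is the bookkeeping of which unit interval is charged to which summand.

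For part (1), every index $s$ with $a\le s\le b$ satisfies $s\ge 0$, and since $f$ is non-increasing we have $f(s)\ge f(v)$ for all $v\in[s,s+1]$, hence $f(s)\ge \int_s^{s+1} f(v)\,dv$. As $s$ runs over $a,\dots,b$ the intervals $[s,s+1]$ tile $[a,b+1]$ without overlap, so summing gives $\sum_{s=a}^{b} f(s)\ge \int_a^{b+1} f(v)\,dv$, which is the asserted bound.

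For part (2), I would split the range of summation at the minimizer $c$. With $N=\lfloor c\rfloor$, for each index $s\le N$ the interval $[s-1,s]$ lies in the decreasing region $[0,c)$, so $f(v)\ge f(s)$ there and $f(s)\le \int_{s-1}^{s} f(v)\,dv$; for each index $s\ge N+1$ the interval $[s,s+1]$ lies in the increasing region, so again $f(v)\ge f(s)$ and $f(s)\le \int_{s}^{s+1} f(v)\,dv$. Summing the first type of estimate over $a\le s\le N$ (whose intervals tile $[a-1,N]$) and the second over $N+1\le s\le b$ (whose intervals tile $[N+1,b+1]$) yields $\sum_{s=a}^{b} f(s)\le \int_{a-1}^{N} f + \int_{N+1}^{b+1} f = \int_{a-1}^{b+1} f - \int_N^{N+1} f$.

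The main obstacle, and the reason the integration range is inflated to $[a-1,b+1]$, is the single unit interval $[N,N+1]$ straddling the minimizer $c$: there are $b-a+1$ summands but $b-a+2$ unit cells in $[a-1,b+1]$, so exactly one cell cannot be charged, and it is the one containing $c$, where neither a left nor a right comparison is available. This is harmless because the functions to which the lemma is applied in Section~\ref{sec5} are nonnegative, whence $\int_N^{N+1} f\ge 0$ and the stated bound $\sum_{s=a}^{b} f(s)\le \int_{a-1}^{b+1} f(v)\,dv$ follows. A final detail to verify is the endpoint case $a=0$, where the cell $[-1,0]$ enters the estimate for $s=0$; this is exactly why $f$ is taken to be defined (and monotone) on $[-1,\infty)$, so that $f(0)\le \int_{-1}^0 f(v)\,dv$ remains valid.
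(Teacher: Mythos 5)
Your proof takes essentially the same route as the paper's: part (1) is the identical cell-by-cell comparison, and for part (2) the paper splits the sum at $[c]$ exactly as you do, charging $\int_{s-1}^{s}f$ to the summands with $s\le[c]$ and $\int_{s}^{s+1}f$ to those with $s\ge[c]+1$. The remark worth adding is that the two caveats you flag are genuine, and the paper's own proof passes over both silently: its final inequality,
\[
\sum_{s=a}^{[c]}\int_{s-1}^{s}f(v)\,dv+\sum_{s=[c]+1}^{b}\int_{s}^{s+1}f(v)\,dv\le\int_{a-1}^{b+1}f(v)\,dv,
\]
amounts precisely to discarding the straddling cell $\int_{[c]}^{[c]+1}f(v)\,dv$, and hence is valid only when that integral is nonnegative --- this can fail for, e.g., $f(v)=\tfrac{1}{100}\,|v-c|-1$, so the lemma as literally stated needs a nonnegativity-type hypothesis. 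Likewise, when $a=0$ the bound $f(0)\le\int_{-1}^{0}f(v)\,dv$ uses monotone behavior on $[-1,0]$, which the hypothesis ``decreasing on $[0,c)$'' does not literally supply. Both points are harmless in context, since every application of the lemma (in the proof of Proposition \ref{convergencerate}) is to positive functions whose decreasing branch covers the relevant range; your explicit accounting of the uncharged cell simply makes visible the unstated assumptions that the paper's proof conceals.
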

\begin{proof}
(1) Since $f$ is non-increasing, we have
\begin{equation}\label{eq-4-20}
\begin{split}
\int_a^{b+1} f(s) ds & = \sum_{s=a}^b \int_s^{s+1} f(v) dv 
\\
&\leq \sum_{s=a}^b f(s).
\end{split}
\end{equation}
(2) We consider the case $c \in [a,b]$. Then 
\begin{equation*}
\begin{split}
\sum_{s=a}^b f(s) & = \sum_{s=a}^{[c]} f(s) + \sum_{s=[c]+1}^{b} f(s)
\\
&\leq \sum_{s=a}^{[c]} \int_{s-1}^s f(v) dv  + \sum_{s=[c]+1}^{b} \int_s^{s+1} f(v) dv
\\
&\leq \int_{a-1}^{b+1} f(v) dv,
\end{split}
\end{equation*}
where the first inequality can be proved similarly to  \eqref{eq-4-20}. The case $c \notin [a,b]$ is easier and can be proved similarly. The proof is finished.
\end{proof}
Now we prove Proposition \ref{convergencerate}
\begin{proof}[Proof of Proposition \ref{convergencerate}]
By \eqref{4-4}, we have
\begin{equation}\label{4-14}
A(t) \leq \bigg(1-\frac{C_1}{(t+w-1)^p}\bigg)A(t-1) + \frac{Q C_2}{(t+w)^{p+q}}, \quad \text{for all } t\geq1,
\end{equation}
where 
\begin{equation*}
 Q = \sup_{t \geq 1} \frac{(t+w)^{p+q}}{(t+w-1)^{p+q}} = \Big( \frac{w+1}{w} \Big)^{p+q}.
\end{equation*}
Using \eqref{4-14} iteratively, for $t \geq 1$ we have
\begin{equation}\label{6-1}
\begin{split}
A(t) &\leq \prod^{t-1}_{s=0}\bigg(1-\frac{C_1}{(s+w)^p}\bigg) A(0) + \sum^{t-1}_{s=1}\Bigg[\frac{QC_2}{(s+w)^{p+q}}\prod^{t-1}_{k=s }\bigg(1-\frac{C_1}{(k+w)^p}\bigg)\Bigg] + \frac{QC_2}{(t+w )^{p+q}}\\
&\leq e^{-\sum^{t-1}_{s=0}\frac{C_1}{(s+w)^p}}A(0) + \sum^{t-1}_{s=1} e^{-\sum^{t-1}_{k=s }\frac{C_1}{(k+w)^p}}\frac{QC_2}{(s+w)^{p+q}}  + \frac{QC_2}{(t+w)^{p+q}}.
\end{split}
\end{equation}
Here we used the fact $1-x \leq e^{-x}$ for $x \in \mathbb{R}$. 

We first consider the case $p<1$. Using Lemma \ref{lem-4-2} we note that for any integers $a$ and $b$ with $b \geq a \geq 0$, 
\begin{equation*}
 \begin{split}
\sum_{v=a}^b \frac{C_1}{(v+w)^p}  \geq \int_a^{b+1} \frac{C_1}{(v+w)^p} dv = \frac{C_1}{1-p} \Big( (b+w+1)^{1-p} - (a+w)^{1-p}\Big)
\end{split}
\end{equation*}
since $s \rightarrow \frac{C_1}{(s+w)^p}$ is decreasing for $s \geq 0$.
This gives
\begin{equation*}
e^{-\sum^{t-1}_{k=s } \frac{C_1}{(k+w)^p}} \leq e^{-\frac{C_1((t+w )^{1-p}-(s+w )^{1-p})}{1-p}}.
\end{equation*}
Combining these estimates with \eqref{6-1}, we have
\begin{equation}\label{a-8}
\begin{split}
A(t) &\leq e^{-\sum^{t-1}_{s=0}\frac{C_1}{(s+w)^p}}A(0) + \sum^{t}_{s=1}e^{-\frac{C_1((t+w )^{1-p}-(s+w )^{1-p})}{1-p}}\frac{QC_2}{(s+w)^{p+q}}
\\
&\leq  e^{-\sum^{t-1}_{s=0}\frac{C_1}{(s+w)^p}}A(0)  + J_1 +J_2,
\end{split}
\end{equation}
where
\begin{equation*}
\begin{split}
J_1 &:= \sum^{[t/2]-1}_{s=1}e^{-\frac{C_1((t+w )^{1-p}-(s+w )^{1-p})}{1-p}}\frac{QC_2}{(s+w)^{p+q}} 
\\
J_2 & :=   \sum^{t}_{s=[t/2]}e^{-\frac{C_1((t+w )^{1-p}-(s+w )^{1-p})}{1-p}}\frac{QC_2}{(s+w)^{p+q}}.
\end{split}
\end{equation*}
Here we regard that $J_1 =0$ for $1 \leq t \leq 3$. In fact, the second inequality in \eqref{a-8} is equality except $t=1$. 
We first estimate the second part $J_2$ as follows:
\begin{equation}\label{eq-a-20}
\begin{split}
J_2& = QC_2 e^{-\frac{C_1}{1-p}(t+w )^{1-p}} \sum_{s=[t/2]}^{t} e^{\frac{C_1}{1-p} (s+w)^{1-p}} \frac{1}{(s+w)^{p+q}} 
\\
&\leq QC_2 e^{-\frac{C_1}{1-p}(t+w )^{1-p}} \int_{[t/2]-1}^{t+1 } e^{\frac{C_1}{1-p} (s+w)^{1-p}} \frac{1}{(s+w)^{p+q}} ds
\\
&=: QC_2 e^{-\frac{C_1}{1-p}(t+w )^{1-p}} \cdot I,
\end{split}
\end{equation}
where we used Lemma \ref{lem-4-2} for the  inequality.
Notice that 
\begin{equation*}
 \left( \frac{1}{C_1} e^{\frac{C_1}{1-p} (s+w)^{1-p}} \right)' =  e^{\frac{C_1}{1-p} (s+w)^{1-p}}\cdot\frac{1}{(s+w)^p}.
\end{equation*}
Using this and an integration by parts, we get
\begin{equation}\label{4-25}
\begin{split}
I & = \frac{1}{C_1}\left[  e^{\frac{C_1}{1-p}(s+w)^{1-p}}\cdot \frac{1}{(s+w)^q} \right]_{[t/2]-1}^{t+1} + \frac{q}{C_1} \int_{[t/2]-1}^{t+1}  e^{\frac{C_1}{1-p} (s+w)^{1-p}} \frac{1}{(s+w)^{q+1}} ds
\\
& = \frac{1}{C_1}\left[ e^{\frac{C_1}{1-p} (t+1+w )^{1-p}} (t+1+w )^{-q} -e^{\frac{C_1}{1-p}([t/2]+w-1)^{1-p}} ([t/2]+w-1)^{-q} \right]
\\
&\quad + \frac{q}{C_1} \int_{[t/2]-1}^{t+1} e^{\frac{C_1}{1-p} (s+w)^{1-p}} \frac{1}{(s+w)^{q+1}} ds.
\end{split}
\end{equation}
Note that
\begin{equation*}
 \frac{q}{C_1} \int_{[t/2]-1}^{t +1} e^{\frac{C_1}{1-p} (s+w)^{1-p}} \frac{1}{(s+w)^{q+1}} ds \leq \frac{q}{C_1} e^{\frac{C_1}{1-p} (t+1+w )^{1-p}} \int_{[t/2]-1}^{t +1} \frac{1}{(s+w)^{q+1}} ds
\end{equation*}
and
\begin{equation*}
 \int_{[t/2]-1}^{t+1 } \frac{1}{(s+w)^{q+1}} ds =  -\frac{(t+1+w )^{-q} - ([t/2]+w-1)^{-q}}{q}.
\end{equation*} 
Using these estimates, the integration part of $I$ in \eqref{4-25} is bounded by
\begin{equation*}
 -\frac{1}{C_1} e^{\frac{C_1}{1-p} (t+1+w)^{1-p}} \Big[ (t+1+w )^{-q} - ([t/2]+w-1)^{-q}\Big].
\end{equation*}
Combining this with \eqref{eq-a-20}, we deduce
\begin{equation*}
  \begin{split}
J_2 & \leq \frac{QC_2}{C_1} e^{-\frac{C_1}{1-p} (t+w )^{1-p}}
\\
&\quad \cdot \Bigg[ \left( e^{\frac{C_1}{1-p} (t+1+w )^{1-p}} (t+1+w )^{-q} - e^{\frac{C_1}{1-p} ([t/2]+w-1)^{1-p}} ([t/2]+w-1)^{-q}\right) 
\\
&\quad - e^{\frac{C_1}{1-p} (t+1+w)^{1-p}} \left( (t+1+w )^{-q} - ([t/2]+w-1)^{-q}\right)\Bigg]
\\
&\leq \frac{QC_2}{C_1} e^{ - \frac{C_1}{1-p}\big[ (t+w)^{1-p} - (t+1+w)^{1-p}\big]} ([t/2]+w-1)^{-q} 
\\
&\leq \frac{QC_2}{C_1} e^{{\frac{C_1}{w^{p}}}}([t/2]+w-1)^{-q} ,
\end{split}
\end{equation*}
where we used that $(t+w+1)^{1-p} - (t+w)^{1-p} \leq \frac{1-p}{(t+w)^p}\leq \frac{1-p}{w^p}$ in the last inequality. Next, using that $s\leq[t/2]-1$ in the summation of $J_1$, we derive the following inequality:
\begin{equation*}
\begin{split}
J_1&\leq QC_2  e^{-\frac{C_1}{1-p}[(t+w )^{1-p} - ([t/2]-1+w)^{1-p}]} \sum_{s=1}^{[t/2]-1} \frac{1}{(s+w)^{p+q}}
\\
&\leq  QC_2   e^{-\frac{C_1 t}{2(t+w)^p}}\sum_{s=1}^{[t/2]-1} \frac{1}{(s+w)^{p+q}}.
\end{split}
\end{equation*}
Here we used  $(t+w)^{1-p} - ([t/2]-1+w)^{1-p} \geq (1-p)\frac{t-[t/2] + 1}{(t+w)^p}\geq \frac{(1-p)t}{2(t+w)^p}$. Combining the above two estimates on $J_1$ and $J_2$ in \eqref{a-8}, we deduce
\begin{equation*}
A(t) \leq  \frac{QC_2}{C_1}e^{\frac{C_1}{w^p}} ([t/2]+w-1)^{-q}+ \mathcal{R}(t),
\end{equation*}
where 
$$
\mathcal{R}(t) =e^{-\sum^{t-1}_{s=0}\frac{C_1}{(s+w)^p}}A(0)  + QC_2 e^{- \frac{C_1t}{2(t+w)^p}} \sum_{s=1}^{[t/2]-1} \frac{1}{(w+s)^{p+q}}.
$$
This establish the proposition for $p \in (0,1)$.
     
Next we consider the case $p=1$. As in  \eqref{6-1} we have
\begin{equation}\label{eq-a-21}
\begin{split}
A(t) \leq e^{-\sum^{t-1}_{s=0}\frac{C_1}{s+w}}A(0) + \sum^{t-1}_{s=1} e^{-\sum^{t-1}_{k=s }\frac{C_1}{k+w}}\frac{QC_2}{(s+w)^{1+q}}  + \frac{QC_2}{(t+w)^{1+q}}.
\end{split}
\end{equation}
 Notice that for any integers $a$ and $b$ with $b \geq a \geq 0$, we use Lemma \ref{lem-4-2} to have
\begin{equation*}
\begin{split}
 \sum_{s=a}^{b} \frac{C_1}{s+w} \geq \int_a^{b+1} \frac{C_1}{s+w} ds &=  \log \bigg(\frac{b+w+1}{a+w}\bigg)^{C_1}.
\end{split}
\end{equation*}
Using this in \eqref{eq-a-21} we get
\begin{equation}\label{4-30}
\begin{split}
A(t) &\leq \bigg(\frac{w}{t+w}\bigg)^{C_1}A(0) + \sum^{t}_{s=1}\bigg(\frac{s+w}{t+w}\bigg)^{C_1}\frac{QC_2}{(s+w)^{1+q}}\\
&\leq \bigg(\frac{w}{t+w}\bigg)^{C_1}A(0) + \bigg(\frac{1}{t+w}\bigg)^{C_1}\sum^{t}_{s=1}\frac{QC_2}{(s+w)^{1+q-C_1}}.
\end{split}
\end{equation}


\noindent Case 1. Suppose that $1+q -C_1 \neq 1$. Then we have 
\begin{equation*}
 \begin{split}
\sum_{s=1}^{t} \frac{1}{(s+w)^{1+q-C}}& \leq \int_{0}^{t+1} \frac{1}{(s+w)^{1 + q-C_1}} ds 
\\
& = \frac{1}{q-C_1} \Big[ w^{C_1-q} - (t+1+w)^{C_1-q}\Big],
\end{split}
\end{equation*}
where we used Lemma \ref{lem-4-2} for the first inequality. Hence $A(t)$ is bounded by 
\begin{equation*}
A(t) \leq \bigg(\frac{w}{t+w}\bigg)^{C_1}A(0) + \frac{w^{C_1 -q}}{q-C_1}\frac{QC_2}{(t+w)^{C_1}} -\frac{1}{q-C_1}\frac{QC_2}{(t+w+1)^q}\frac{(t+w+1)^{C_1}}{(t+w)^{C_1}}.
\end{equation*}

\noindent Case 2. Suppose that $1+q-C_1 =1$. Then we have, 
\begin{equation*}
 \begin{split}
\sum_{s=1}^{t} \frac{1}{s+w} \leq \int_{0}^{t} \frac{1}{s+w} ds = \log \bigg(\frac{t+w}{w}\bigg).
\end{split}
\end{equation*}
Hence $A(t)$ is bounded by
\begin{equation*}
A(t) \leq \bigg(\frac{w}{t+w}\bigg)^{C_1}A(0) +  \log \bigg(\frac{t+w}{w}\bigg)\frac{QC_2}{(t+w)^{C_1}}.
\end{equation*}
Combining the above estimates, we find
\begin{equation*}
 A(t) \leq \Big( \frac{w}{t+w}\Big)^{C_1} A(0) +  \mathcal{R}(t) ,
\end{equation*}
where
\begin{equation*}
  \mathcal{R}(t)   = \left\{\begin{array}{ll}  \frac{w^{C_1 -q}}{q-C_1}\cdot\frac{QC_2}{(t+w)^{C_1}}& \textrm{if}~q>C_1
\\
\log \left(\frac{t+w}{w}\right)\cdot\frac{QC_2}{(t+w)^{C_1}} &\textrm{if}~ q=C_1
 \\
\frac{1}{C_1-q}\cdot\left( \frac{w+1}{w}\right)^{C_1} \cdot\frac{QC_2}{(t+w+1)^q}& \textrm{if}~ q<C_1.
\end{array}\right.
\end{equation*}
The proof is done.
\end{proof}

\begin{rem}

\textbf{(Bound of $\mathcal{R}(t)$)} Here we show that for $0<p<1$, the $\mathcal{R}(t)$ of Proposition \ref{convergencerate} satisfies $\mathcal{R}(t) = O(t^{-N})$. Recall that
$$\mathcal{R}(t)=e^{-\sum^{t-1}_{s=0}\frac{C_1}{(s+w)^p}}A(0)  + QC_2 e^{- \frac{C_1t}{2(t+w)^p}} \sum_{s=1}^{[t/2]-1} \frac{1}{(s+w)^{p+q}}.$$ 
Since we know that $ QC_2 e^{- \frac{C_1t}{2(t+w)^p}} \sum_{s=1}^{[t/2]-1} \frac{1}{(s+w)^{p+q}} =O(t^{-N}) $, it is sufficient to show that $e^{-\sum^{t-1}_{s=0}\frac{C_1}{(s+w)^p}}A(0)= O(t^{-N})$. 
Note that
\begin{equation*}
 \begin{split}
\sum_{s=0}^{t-1} \frac{C_1}{(s+w)^{p}} & \geq \int_0^{t} \frac{C_1}{(s+w)^p} ds
\\
& = \frac{C_1}{1-p} \Big[ (t+w)^{1-p} -w^{1-p}\Big].
\end{split}
\end{equation*}
Using this we estimate the first term of $\mathcal{R}(t)$ as
\begin{equation}\label{eq-5-1}
e^{-\sum_{s=0}^{t-1} \frac{C_1}{(s+w)^p}} A(0) \leq A(0) e^{\frac{C_1}{1-p}w^{1-p}} e^{-\frac{C_1}{1-p}(t+w)^{1-p}}=O(t^{-N}).
\end{equation}

\end{rem}
\begin{lem}\label{lem-a-2}Fix $p \in (0,1]$. Let $a, b$ and $w$ be any positive values and $\beta \in (0,1)$ satisfying $\frac{a}{w^p} <1$. 
Assume that  a positive sequence $\{B(t)\}_{t \geq 0}$ satisfies
\begin{equation*}
B(t+1) \leq \Big(1 - \frac{a}{(t+w)^p}\Big) B(t) + \frac{b \beta^t}{(t+w)^p}
\end{equation*}
for $t \geq 0$ and $B(0)=0$. Then we have the following estimates.
\begin{enumerate}
\item If $p=1$, then
\begin{equation*}
B(t+1) \leq J_{a,w,\beta} \cdot \frac{bw^{a-1}}{(t+w)^a}  + \frac{b \beta^t}{(t+w)}, 
\end{equation*} 
where $J_{a,w,\beta} = \sum_{j=0}^{\infty} \frac{(j+w)^{a-1}}{w^{a-1}} \beta^j$. Or we have
\begin{equation*}
B(t+1) \leq   \frac{bw(w+1)^{l-1}}{(t+1+w)^{l+1}}\cdot \frac{1}{1-  \beta e^{l/(w+1)}} + \frac{b\beta^t}{(t+w)},
\end{equation*}
where $l$ is any number satisfying $0 \leq l \leq a-1$ and $ \beta e^{l/(w+1)} <1$.
\item If $p \in (0,1)$, then
\begin{equation*}
B(t+1) \leq \frac{b}{ w^p (1-\beta)} \Big( e^{ -\frac{a}{2}\cdot \frac{t}{(t+1+w)^p} }+  {\beta^{[t/2]}}\Big).
\end{equation*}
\end{enumerate}
\end{lem}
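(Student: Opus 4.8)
The plan is to treat \eqref{est2}-type recursion here as a linear first-order Gr\"onwall inequality with vanishing initial datum $B(0)=0$, solve it explicitly by iteration, and then estimate the resulting sum by two different bookkeeping schemes in the regimes $p=1$ and $p\in(0,1)$. First I would unroll the recursion. Since the hypothesis $a/w^p<1$ together with the monotonicity of $k\mapsto(k+w)^{-p}$ guarantees that every factor $1-\frac{a}{(k+w)^p}$ is positive, a routine induction on $t$ starting from $B(0)=0$ yields
\[
B(t+1)\le \sum_{s=0}^{t}\Bigg[\prod_{k=s+1}^{t}\Big(1-\frac{a}{(k+w)^p}\Big)\Bigg]\frac{b\beta^s}{(s+w)^p},
\]
where the empty product (the $s=t$ term) equals $1$. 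The two output terms of the lemma arise by isolating this $s=t$ summand, which is exactly $\frac{b\beta^t}{(t+w)^p}$ and matches the final terms in both parts, and by estimating the remaining sum. To control the products I would use $1-x\le e^{-x}$ to get $\prod_{k=s+1}^t(1-\frac{a}{(k+w)^p})\le \exp\big(-a\sum_{k=s+1}^t (k+w)^{-p}\big)$, and then apply Lemma \ref{lem-4-2}(1) to replace the exponent from below by $\int_{s+1}^{t+1}(x+w)^{-p}\,dx$.

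For $p=1$ this integral equals $\log\frac{t+w+1}{s+w+1}$, so each product is bounded by $(\frac{s+w+1}{t+w+1})^a$. To get the first (series) bound I would simplify $(\frac{s+w+1}{t+w+1})^a\frac{1}{s+w}$ to a constant multiple of $\frac{(s+w)^{a-1}}{(t+w)^a}$ and then extend the summation over $s$ to infinity, recognizing $\sum_{s\ge 0}(s+w)^{a-1}\beta^s=w^{a-1}J_{a,w,\beta}$, a series convergent because $\beta<1$. For the second bound I would spend only $l+1\le a$ powers of the decay, writing $(\frac{s+w+1}{t+w+1})^a\le (t+w+1)^{-(l+1)}(s+w+1)^{l+1}$, and then tame the growing factor via $(s+w+1)^l\le (w+1)^l e^{ls/(w+1)}$ (from $1+x\le e^x$), so that the residual sum $\sum_s(s+w+1)^l\beta^s$ becomes the geometric series $\frac{(w+1)^l}{1-\beta e^{l/(w+1)}}$, which converges precisely under the stated hypothesis $\beta e^{l/(w+1)}<1$. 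Collecting the constants produces the claimed polynomial rate $(t+w+1)^{-(l+1)}$.

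For $p\in(0,1)$ I would argue more crudely but uniformly. Factoring out $(s+w)^{-p}\le w^{-p}$, it remains to bound $\frac{b}{w^p}\sum_{s=0}^{t}\exp\big(-a\sum_{k=s+1}^t(k+w)^{-p}\big)\beta^s$, which I would split at $s=[t/2]$. For $s\ge [t/2]$ I bound the exponential by $1$ and sum the tail $\sum\beta^s\le \frac{\beta^{[t/2]}}{1-\beta}$; for $s<[t/2]$ I use that there are $t-s\ge t/2$ terms, each at least $(t+w)^{-p}$, so the exponent is at least $\frac{at}{2(t+w)^p}$, which pulls a uniform factor $e^{-at/(2(t+w)^p)}$ out front and leaves $\sum\beta^s\le\frac{1}{1-\beta}$. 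Adding the two pieces gives the bound of part (2) (up to the harmless relaxation of $(t+w)$ to $(t+1+w)$ in the exponent).

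I expect the genuinely delicate step to be the $p=1$ bookkeeping: converting the exponential-of-integral estimate into a clean ``polynomial $\times$ convergent geometric series'' form and verifying the convergence condition $\beta e^{l/(w+1)}<1$, where the choice of $l$ trades decay rate against the size of the geometric factor. By contrast, the dyadic split at $[t/2]$ renders the $p\in(0,1)$ case essentially routine once the product has been replaced by its exponential bound; the only point requiring care there is ensuring the number of summation terms in the first block really is at least $t/2$, which follows from $t-[t/2]+1\ge t/2$.
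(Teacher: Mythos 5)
Your proposal is correct and follows essentially the same route as the paper: unroll the recursion from $B(0)=0$, bound the products via $1-x\le e^{-x}$ plus the integral comparison of Lemma \ref{lem-4-2} (giving the ratio-power bound $\big(\tfrac{s+w+1}{t+w+1}\big)^a$ when $p=1$), then handle $p=1$ by either summing the convergent series $J_{a,w,\beta}$ or trading decay for the geometric series via $(s+w+1)^l\le (w+1)^l e^{ls/(w+1)}$, and handle $p\in(0,1)$ by splitting at $[t/2]$. The only (harmless) deviation is that for $p\in(0,1)$ you lower-bound the exponent by counting the $t-s\ge t/2$ terms each of size at least $(t+w)^{-p}$, where the paper instead compares with $\int (s+w)^{-p}\,ds$ and then uses $(t+1+w)^{1-p}-([t/2]+w)^{1-p}\ge \tfrac{(1-p)t}{2(t+1+w)^p}$; both yield the stated bound.
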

\begin{proof}
Letting $a(t) = 1-\frac{a}{(t+w)^p}$ and $b(t) = \frac{b \beta^t}{(t+w)^p}$ for simplicity, we find that
\begin{equation}\label{eq-a-22}
\begin{split}
B(t+1)& \leq \Big(\prod_{k=0}^t a(k)\Big) B(0) + \sum_{j=0}^{t-1} \Big(\prod_{k=j+1}^t a(k)\Big) b(j) + b(t)
\\
&= \sum_{j=0}^{t-1} \Big(\prod_{k=j+1}^t a(k)\Big) b(j) + b(t).
\end{split}
\end{equation}
(Case $p=1$). In this case, we have
\begin{equation*}
B(t+1) \leq \sum_{j=0}^{t-1} \Big(\prod_{k=j+1}^t \Big( 1- \frac{a}{k+w}\Big) \Big) \frac{ b \beta^j}{(j+w)} + \frac{ b \beta^t}{(t+w)}.
\end{equation*}
Notice that
\begin{equation*}
\begin{split}
\prod_{k=j+1}^t \Big( 1 -\frac{a}{k+w}\Big) & \leq e^{ - \sum_{k=j+1}^t \frac{a}{k+w}}
\\
&\leq e^{- a \int_{j+w+1}^{t+w+1} \frac{1}{s} ds }
\\
&  = e^{ -a \Big( \log (t+w+1) - \log (j+w+1)\Big)}
\\
& = \Big( \frac{j+1+w}{t+1+w}\Big)^a.
\end{split}
\end{equation*}
Using this we get
\begin{equation*}
\begin{split}
B(t+1)& \leq \sum_{j=0}^{t-1} \frac{(j+1+w)^{a}}{(t+1+w)^a} \frac{b \beta^j}{j+w} + \frac{ b\beta^t}{(t+w)}
\\
& =\Big(\frac{w+1}{w}\Big) \sum_{j=0}^{t-1} \frac{(j+1+w)^{a-1}}{(t+1+w)^{a}} b \beta^j + \frac{b \beta^t}{(t+w)}
\\
& \leq J_{a,w,\beta} \cdot \frac{bw^{a-1}}{(t+1+w)^a} + \frac{b \beta^t}{(t+w)},
\end{split}
\end{equation*}
where $J_{a, w, \beta} = \big( \frac{w+1}{w}\big)\sum_{j=0}^{\infty} \frac{(j+1+w)^{a-1}}{w^{a-1}} \beta^j$.

For any $0 \leq l \leq a-1$, we may also bound it as
\begin{equation*}
\begin{split}
B(t+1) & \leq \Big(\frac{w}{w+1}\Big) \frac{b(w+1)^l}{(t+1+w)^{l+1}}\sum_{j=0}^{t-1} \Big(1 + \frac{j}{w+1}\Big)^l \beta^j
 + \frac{ b\beta^t}{(t+w)}
\\
&\leq  \Big(\frac{w}{w+1}\Big) \frac{b(w+1)^{l}}{(t+1+w)^{l+1}} \sum_{j=0}^{t-1}  e^{j l/(w+1)} \beta^j + \frac{ b\beta^t}{(t+w)}
\\
&\leq  \Big(\frac{w}{w+1}\Big) \frac{b(w+1)^l}{(t+1+w)^{l+1}}\cdot \frac{1}{1-  \beta e^{l/(w+1)}} + \frac{b\beta^t}{(t+w)},
\end{split}
\end{equation*}
where we used that $1+x \leq e^x$ for $x \geq 0$ in the second inequality.
\medskip 

\noindent 
(Case $p\in (0,1)$). We estimate \eqref{eq-a-22} further as
\begin{equation*}
\begin{split}
B(t+1)& \leq \sum_{j=0}^{[t/2]-1} \Big( \prod_{k=j+1}^t a(k)\Big) b(j) + \sum_{j=[t/2]}^{t-1} \Big( \prod_{k=j+1}^t a(k)\Big) b(j) + b(t)
\\
&\leq \Big( \prod_{k=[t/2]}^t a(k)\Big) \sum_{j=0}^{[t/2]-1} b(j) + \sum_{j=[t/2]}^t b(j).
\end{split}
\end{equation*}
Since $b(t) = \frac{b\beta^t}{(t+w)^p}$, we have $b(t) \leq \frac{b}{w^p  }\beta^t$. Thus, 
\begin{equation*}
\begin{split}
B(t+1) &\leq \frac{1}{1-\beta}\Big( \prod_{k=[t/2]}^t a(k)\Big) \frac{b}{w^p } + \frac{b}{w^p} \sum_{j=[t/2]}^t \beta^j
\\
&\leq \frac{b}{(1-\beta) w^p   } \Big( \prod_{k=[t/2]}^t a(k)\Big) + \frac{b}{w^p  (1-\beta)} \beta^{[t/2]}.
\end{split}
\end{equation*}
We estimate
\begin{equation*}
\begin{split}
\prod_{k=[t/2]}^{t} \Big( 1- \frac{a}{(k+w)^p}\Big) & \leq e^{- \sum_{k=[t/2]}^{t} \frac{a}{(k+w)^p}}
\\
&\leq e^{ -a \int_{[t/2]+w}^{t+w+1} \frac{1}{s^p} ds}
\\
& = e^{ -\frac{a}{ (1-p)} \big( (t+w+1)^{1-p} - ([t/2]+w)^{1-p}\big)}.
\end{split}
\end{equation*}
Therefore we have
\begin{equation*}
\begin{split}
B(t+1) &\leq  \frac{b}{  (1-\beta) w^p} \bigg( e^{ -\frac{a}{ (1-p)} ( (t+w+1)^{1-p} - ([t/2]+w)^{1-p}) } + {\beta^{[t/2]}}\bigg)\\
&\leq \frac{b}{(1-\beta) w^p} \Big({e^{ -\frac{a}{2}\cdot \frac{t}{(t+w+1)^p} }}+ {\beta^{[t/2]}}\Big).
\end{split}
\end{equation*}
Here we used  $(t+1+w)^{1-p} - ([t/2]+w)^{1-p} \geq (1-p)\frac{t-[t/2] + 1}{(t+1+w)^p}\geq \frac{(1-p)t}{2(t+1+w)^p}$. The proof is done.
\end{proof}


\begin{thebibliography}{20}




\bibitem{BCN} L. Bottou, F. E. Curtis, and J. Nocedal, Optimization methods for large-scale machine learning, SIAM Review, vol. 60, no. 2, pp.
223--311, 2018.

\bibitem{Bu} S. Bubeck. Convex optimization: Algorithms and complexity. Foundations and Trends in Machine Learning, 8(3-4):231--357, 2015.

\bibitem{Boyd Gossip} S. Boyd, A. Ghosh, B. Prabhakar, and D. Shah, Randomized gossip
algorithms, IEEE/ACM Transactions on Networking (TON), {\bf14},
no. SI, pp. 2508--2530 (2006).

\bibitem{BCM} F. Bullo, J. Cortes, S. Martinez,  Distributed Control of Robotic Networks: A Mathematical Approach to Motion Coordination Algorithms, Princeton Series in Applied Mathematics (2009).

\bibitem{CYRC} Y. Cao, W. Yu, W. Ren, G. Chen, An overview of recent progress in the study of distributed multiagent coordination. IEEE Trans. Ind. Inform. 9(1), 427--438 (2013).


\bibitem{I} I.-A. Chen et al., Fast distributed first-order methods,  Master’s thesis, Massachusetts Institute of Technology, 2012.



\bibitem{DAW} J. C. Duchi, A. Agarwal, and M. J. Wainwright,  Dual averaging for distributed optimization: Convergence analysis and network scaling,  IEEE Trans. Autom. Control, vol. 57, no. 3, pp. 592--606, Mar. 2012.


\bibitem{FMGP} A. Falsone, K. Margellos, S. Garatti, and M. Prandini,  Dual decomposition for multi-agent distributed optimization with coupling constraints,  Automatica, vol. 84, pp. 149--158, Oct. 2017.

\bibitem{FCG} P. A. Forero, A. Cano, and G. B. Giannakis, Consensus-based distributed support vector machines, Journal of Machine Learning Research, vol. 11, pp. 1663--1707 (2010). 












\bibitem{QT} Q. Ling and Z. Tian, Decentralized sparse signal recovery for compressive sleeping wireless sensor networks, IEEE Trans. Signal Process., 58 (2010), pp. 3816--3827.


 \bibitem{MJ} M. Maros, J. Jald\'en, On the Q-Linear Convergence of Distributed Generalized ADMM Under Non-Strongly Convex Function Components. 
IEEE Transactions on Signal and Information Processing over Networks {\bf 5} (3) 442--453 (2019).


\bibitem{NO} A. Nedi{\'c} and A. Ozdaglar, Distributed subgradient methods for multi-agent optimization, 	IEEE Trans. Autom. Control 54 (2009), pp. 48--61.


\bibitem{NO2} A. Nedi{\'c} and A. Olshevsky, Distributed optimization over time-varying directed graphs,
IEEE Trans. Autom. Control {\bf60} (2015), pp. 601–615.

 
\bibitem{PN}S. Pu and A. Nedi{\'c}, Distributed stochastic gradient tracking methods,
Math. Program, pp. 1--49, 2018



\bibitem{RB} H. Raja and W. U. Bajwa, Cloud K-SVD: A collaborative dictionary learning algorithm for big, distributed data, IEEE Transactions on Signal Processing, vol. 64, no. 1, pp. 173--188, Jan. 2016.



\bibitem{RNV} S. S. Ram, A. Nedi{\'c}, and V. V. Veeravalli, Distributed Stochastic
Subgradient Projection Algorithms for Convex Optimization, Journal
of Optimization Theory and Applications, {\bf 147}, no. 3, pp. 516--545, 2010.

\bibitem{SLYW} W. Shi, Q. Ling, K. Yuan, G. Wu, and W. Yin,  On the linear convergence of the ADMM in decentralized consensus optimization, IEEE Trans. Signal Process., vol. 62, no. 7, pp. 1750--1761, Apr. 2014. 


\bibitem{SJ} A. Simonetto and H. Jamali-Rad,  Primal recovery from consensus-based dual decomposition for distributed convex optimization,  J. Optim. Theory Appl., vol. 168, pp. 172--197, 2016.


\bibitem{XPNK} R. Xin, S. Pu, A. Nedi\'c, and U. A. Khan, A general framework for decentralized optimization with first-order methods, Proceedings of the IEEE, vol. 108, no. 11, pp. 1869--1889, (2020).



\bibitem{YLY} K. Yuan, Q. Ling, W. Yin, On the convergence of decentralized gradient descent. 
SIAM J. Optim., {\bf26} (3), 1835–1854. 


 




\end{thebibliography}
\end{document}